\theoremstyle{plain}
\newtheorem{thm}{Theorem}[section]
\newtheorem{prop}[thm]{Proposition}
\newtheorem{lemma}[thm]{Lemma}
\newtheorem{cor}[thm]{Corollary}
\theoremstyle{definition}
\newtheorem{defn}[thm]{Definition}
\newtheorem*{defn*}{Definition}
\newtheorem*{question*}{Question}
\newtheorem{example}[thm]{Example}
\newtheorem*{example*}{Example}
\newtheorem{rmk}[thm]{Remark}
\newtheorem*{rmk*}{Remark}
\newcommand{\field}[1]{\mathbb{#1}}
\newcommand{\Z}{\field{Z}}
\newcommand{\ideal}[1]{\mathfrak{#1}}
\newcommand{\m}{\ideal{m}}
\newcommand{\n}{\ideal{n}}
\newcommand{\p}{\ideal{p}}
\newcommand{\q}{\ideal{q}}
\newcommand{\func}[1]{\mathrm{#1} \,}
\newcommand{\Spec}{\func{Spec}}
\newcommand{\hgt}{\func{ht}}
\newcommand{\ra}{\rightarrow}
\newcommand{\li}
 {\leftfootline}
\renewcommand{\phi}{\varphi}
\DeclareMathOperator{\Frac}{Frac}
\DeclareMathOperator{\chr}{char}
\DeclareMathOperator{\Min}{Min}
\newcommand{\Rone}{(R$_1$)}
\newcommand{\Ronei}{$($\emph{R}$_1)$}
\author{Neil Epstein}
\address{Department of Mathematical Sciences \\ George Mason University \\ Fairfax, VA  22030}
\email{nepstei2@gmu.edu}
\author{Jay Shapiro}
\address{Department of Mathematical Sciences \\ George Mason University \\ Fairfax, VA  22030}
\email{jshapiro@gmu.edu}
\title{Perinormality in pullbacks}
\subjclass[2010]{13B21, 13F05, 13F45}
\keywords{pullbacks, generalized Krull domain, going down, perinormal, fragile}
\date{\today}
\begin{document}
\begin{abstract}
We further develop the notion of perinormality from \cite{nmeSh-peri}, showing that it is preserved by many pullback constructions.  In doing so, we introduce the concepts of \emph{relative perinormality} and \emph{fragility} for ring extensions.
\end{abstract}

\maketitle

\section{Introduction}\label{sec:intro}

In \cite{nmeSh-peri}, we introduced the notion of \emph{perinormality} for integral domains.  A domain $R$ is \emph{perinormal} if for \emph{local} overrings $L$ of $R$, going-down over $R$ coincides with being a localization of $R$.  Equivalently \cite[Proposition 2.4]{nmeSh-peri}, $R$ is perinormal if for (arbitrary) overrings of $R$, flatness over $R$ and going-down over $R$ coincide.

In this article, we show that perinormality is preserved (or at least retained) in a number of pullback constructions.  We begin in \S\ref{sec:rel} by introducing the concept of \emph{relative} perinormality, and show that it lifts (see Proposition~\ref{pr:relative}).  We use this observation to show that certain rings of the form $R+XS[X]$, $R+XS[\![X]\!]$, and $D+M$ are perinormal (see Corollary~\ref{cor:polyabs} and Theorem~\ref{thm:d+m}) whenever the base ring is.

In \S\ref{sec:fragile}, which comprises the bulk of the paper,  we introduce the concept of what it means for a subring inclusion to be \emph{fragile}.  We show, under mild assumptions, that the pullback of a factor ring of a generalized Krull domain along a fragile ring inclusion is perinormal (see Theorem~\ref{thm:fragilepullback}).  In Theorem~\ref{thm:minprimes}, we show how to contract finite sets of minimal primes in many familiar classes of Noetherian normal domains in a fragile way.  We give a partial converse in Proposition~\ref{pr:subfield}.  In Example~\ref{ex:badpullback}, we show how Theorem~\ref{thm:fragilepullback} may be used while showing that the converse to Proposition~\ref{pr:relative} fails.  This background allows us to demonstrate that there is no canonical ``perinormalization'' of a domain akin to the integral closure operation (see Example~\ref{ex:noperinormalization}).

Finally in \S\ref{sec:hyper}, we show that algebraically contracting a hypersurface to a point typically results in a perinormal (albeit non-Noetherian) domain (see Theorem~\ref{thm:hyper}).

\section{Relative perinormality}\label{sec:rel}
\begin{defn}
Let $A\subseteq B$ be an extension of rings.    We say that $A$ is {\it perinormal in} $B$, if given any prime ideal $\p$ of $A$, $A_\p$ is the only local ring between $A_\p$ and $B_\p$ that is centered on $\p A_\p$ and satisfies going-down over $A_\p$.
\end{defn}

We note some basic facts.   To say an integral domain is perinormal simply means that $A$ is perinormal in frac$(A)$, the field of quotients of $A$.  Any ring is perinormal in itself.  A field is never perinormal in a field extension unless the fields are equal.   If $A$ is perinormal in $B$, then $A_W$ is perinormal in $B_W$ for any multiplicatively closed subset $W$ of $A$.   Let $A$ be a domain and $A\subseteq K$, where $K$ is a field.    If $A$ is perinormal in $K$, then $K$ is the fraction field of $A$ (and so $A$ is perinormal).  To see why, let $W =A \setminus \{0\}$.   Thus $A_W= \Frac(A)$ is perinormal in $K_W = K$.  As noted previously, this implies the two fields are equal.

Let $R$ be a ring and $I$ an ideal of $R$.  Then $V_R(I)$, or simply $V(I)$ if there is no chance for confusion, denotes the set $\{P\in \Spec R: I \subseteq P\}$. We first present a lemma on pullbacks that collects some known results that will be used throughout the paper.  Consider the following pullback diagram
\[\xymatrix{
R\ \ar@{^(->}[r]^f \ar@{->>}[d]^\alpha &S\ar@{->>}[d]^\beta\\
A\ \ar@{^(->}[r]^g &B
}\]
  where $g$ (and hence $f$) is injective, while $\beta$ (and hence $\alpha$) is surjective.
  Viewing $R$ as a subring of $S$, we see that ker $\beta =$ ker $\alpha$ is an ideal of $R$.  In this case, we say that $R$ is the \emph{pullback} of (the diagram) $A \rightarrow B \leftarrow S$.  Note that all of the pullbacks in this paper will satisfy these assumptions.

\begin{lemma} \label{lem:basic}  Given $R$, $S$, $A$ and $B$ as in the above diagram, let $J=$ ker $\alpha$.

\begin{enumerate}
\item\label{it:poiso} There is a poset isomorphism between $(\Spec S)\setminus V_S(J)$ and $(\Spec R)\setminus V_R(J)$ given by the contraction map.

\item\label{it:locoutside} For any $P\in (\Spec S)\setminus V_S(J)$,  the canonical map $R_{P\cap R}\to S_P$ is an isomorphism.

\item\label{it:loc} If $W$ is any multiplicatively closed subset of $R$, then $R_W$ is the pullback of $A_{\overline{W}} \to B_{\overline{W}} \leftarrow S_W$, where $\overline{W}$ denotes the image of $W$ in $A$.
    \end{enumerate}
\end{lemma}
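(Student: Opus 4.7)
The plan is to exploit the fact that $J=\ker\alpha=\ker\beta$ is a \emph{common} ideal of $R$ and $S$ in a strong sense: since $J\subseteq R$ and $J$ is an ideal of $S$, for any $x\in J$ we have $xS\subseteq J\subseteq R$. The immediate payoff is that $S\subseteq R[1/x]$, and hence $R[1/x]=S[1/x]$ as subrings of $S[1/x]$. Everything in the lemma will be derived from this single observation.

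\medskip

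\textbf{Proof of (\ref{it:locoutside}).} Given $P\in(\Spec S)\setminus V_S(J)$, pick $x\in J\setminus P$. Then $x$ is a unit in $S_P$, so $S_P=(S[1/x])_{PS[1/x]}$. Because $x\in J\subseteq R$ and $x\notin P$, we also have $x\notin P\cap R$, giving $R_{P\cap R}=(R[1/x])_{(P\cap R)R[1/x]}$. By the key observation $R[1/x]=S[1/x]$, and the two extended primes agree under this identification, proving the map is an isomorphism.

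\medskip

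\textbf{Proof of (\ref{it:poiso}).} Contraction clearly preserves containment and sends $(\Spec S)\setminus V_S(J)$ into $(\Spec R)\setminus V_R(J)$ (if $x\in J\setminus P$, then $x\in R\setminus(P\cap R)$, since $J\subseteq R$). For surjectivity, given $Q\in(\Spec R)\setminus V_R(J)$, choose $x\in J\setminus Q$, form $QR[1/x]\in\Spec R[1/x]=\Spec S[1/x]$, and contract to a prime $P$ of $S$; one checks $J\not\subseteq P$ and $P\cap R=Q$. For injectivity, if $P_1\cap R=P_2\cap R=Q$ with $J\not\subseteq P_i$, pick $x\in J\setminus Q$; then each $P_i$ becomes $QR[1/x]$ after inverting $x$, and contracting back to $S$ recovers $P_i$, so $P_1=P_2$. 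Order-reflection of the inverse map follows the same way.

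\medskip

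\textbf{Proof of (\ref{it:loc}).} The pullback $R$ is the equalizer of the two maps $A\oplus S\rightrightarrows B$ defined by $(a,s)\mapsto g(a)$ and $(a,s)\mapsto \beta(s)$. Localization at $W$ is exact and commutes with finite direct sums, so applying $-\otimes_R R_W$ preserves this equalizer. The elements of $W$ act on $A$ through $\bar W$ and on $S$ through $f(W)$, so the localized equalizer is precisely $A_{\bar W}\times_{B_{\bar W}}S_W$, giving $R_W\cong A_{\bar W}\times_{B_{\bar W}}S_W$.

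\medskip

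None of the three parts is a real obstacle; the only mildly technical point is verifying in (\ref{it:poiso}) that the contraction of $QR[1/x]$ from $S[1/x]$ down to $S$ really does avoid $J$, but this is automatic because its contraction to $R$ is $Q\not\supseteq J$.
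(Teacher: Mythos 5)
Your proof is correct, and it takes a genuinely different route from the paper's: the paper does not prove the lemma itself but simply cites \cite[Corollary~1.5(3) and Proposition~1.9]{Fo-topdef} for parts~(\ref{it:poiso}) and~(\ref{it:loc}) and \cite[Lemma~4.4]{nmeSh-peri} for part~(\ref{it:locoutside}). What you have supplied is a self-contained elementary argument. The unifying observation that for $x\in J$ one has $xS\subseteq J\subseteq R$, so that the natural map $R_x\to S_x$ is an isomorphism, is exactly the right engine: it makes~(\ref{it:locoutside}) immediate as a further localization at the corresponding prime, and~(\ref{it:poiso}) reduces to matching up the spectra of $R_x$ and $S_x$ for varying $x\in J$ outside the relevant primes. (For~(\ref{it:poiso}), note your choice of $x\in J\setminus(P_2\cap R)$ in the order-reflection step automatically avoids $P_1\cap R\subseteq P_2\cap R$, so one $x$ serves both primes, which is the detail your ``follows the same way'' glosses over.) One small precision point: the lemma is stated without assuming $S$ is a domain, so ``$R[1/x]=S[1/x]$ as subrings of $S[1/x]$'' should really be read as ``the canonical ring map $R_x\to S_x$ is an isomorphism,'' since $R\to S_x$ need not be injective in general; your two-sided inverse $s\mapsto(xs)/x$ shows this cleanly without any embedding into an ambient ring. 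For~(\ref{it:loc}), recasting the pullback as the kernel of $A\oplus S\to B$ and invoking exactness of localization is the standard argument and is correct; the one thing worth saying aloud is that $A\otimes_R R_W\cong A_{\overline W}$ and $S\otimes_R R_W\cong S_{f(W)}$ as rings, so the localized exact sequence is precisely the pullback diagram for $A_{\overline W}\to B_{\overline W}\leftarrow S_W$. Compared to the paper, your approach trades an appeal to the literature for a short direct verification, which is arguably more illuminating for a reader who does not have Fontana's paper at hand.
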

\begin{proof} (1) and (3) are Corollary 1.5 (3) and Proposition 1.9 respectively of  \cite{Fo-topdef}, while (2) is \cite[Lemma 4.4]{nmeSh-peri}.
\end{proof}

\begin{prop}\label{pr:relative} Let $A\subseteq B$ be a ring extension and let $J$ be an ideal of $B$ that is contained in $A$.  If  $A/J$ is perinormal in $B/J$, then $A$ is perinormal in $B$.
\end{prop}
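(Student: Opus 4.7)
The plan is to fix a prime $\p$ of $A$ and reduce to the perinormality hypothesis at the quotient level; the argument splits on whether $J \subseteq \p$. If $J \not\subseteq \p$, then applying Lemma~\ref{lem:basic}(\ref{it:loc}) with $W = A \setminus \p$ collapses the bottom row of the localized pullback diagram: any $j \in J \cap W$ is a unit in $A_\p$ but is zero in $A/J$, so $(A/J)_{\overline{W}} = (B/J)_{\overline{W}} = 0$. The pullback of $0 \to 0 \leftarrow B_\p$ is just $B_\p$, so $A_\p = B_\p$ and there is nothing to check.

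The substantive case is $J \subseteq \p$. Suppose $L$ is a local ring with $A_\p \subseteq L \subseteq B_\p$, centered on $\p A_\p$, and satisfying going-down over $A_\p$. Because $J \subseteq A$ is an ideal of $B$, one has $I := JA_\p = JB_\p$, so $I$ is simultaneously an ideal of $A_\p$, $L$, and $B_\p$. Taking quotients yields $A_\p/I \subseteq L/I \subseteq B_\p/I$, and Lemma~\ref{lem:basic}(\ref{it:loc}) identifies the outer rings with $(A/J)_{\overline{\p}}$ and $(B/J)_{\overline{\p}}$ respectively.

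It remains to verify that $L/I$ meets the perinormality hypotheses for $(A/J)_{\overline{\p}} \subseteq (B/J)_{\overline{\p}}$ and then invoke the hypothesis. Locality of $L/I$ and the fact that its maximal ideal contracts to that of $A_\p/I$ are immediate. For going-down over $A_\p/I$: lift a chain $\overline{\q}_0 \subsetneq \overline{\q}_1$ in $\Spec(A_\p/I)$ and a prime of $L/I$ above $\overline{\q}_1$ to primes of $A_\p$ and $L$ containing $I$; apply going-down of $L$ over $A_\p$; and observe that the resulting prime of $L$ must again contain $I$, since its contraction to $A_\p$ does. By the hypothesis, together with the localization observation from the basic facts following the definition, $(A/J)_{\overline{\p}}$ is perinormal in $(B/J)_{\overline{\p}}$, so $L/I = A_\p/I$; since $I \subseteq A_\p$, this lifts to $L = A_\p$. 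The one step requiring care is the going-down transfer at the quotient level, where the containment $I \subseteq A_\p \subseteq L$ is exactly what forces the lifted primes to inherit $I$ so that the property descends cleanly.
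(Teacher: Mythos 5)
Your proposal is correct and follows essentially the same route as the paper: reduce to a fixed prime $\p$, dispose of the case $J\not\subseteq\p$ via Lemma~\ref{lem:basic}, and in the case $J\subseteq\p$ pass to quotients by $I=JA_\p=JB_\p$ to invoke the hypothesis on $A/J\subseteq B/J$. The only notable difference is that you spell out in detail why going-down descends to $L/I$ over $A_\p/I$ — a step the paper leaves tacit — and that verification is carried out correctly.
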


\begin{proof}
Suppose that $A/J$ is perinormal in $B/J$.   Clearly $\Spec A$ is the union of the sets $X$ and $Y$, where $X$ consists of those primes that contain $J$ and $Y$ are the primes that do not contain $J$.  Then the natural map $\Spec(A/J) \to X$ is clearly a bijection, while the contraction map gives a bijection from $(\Spec B)\setminus V_B(J)$ to $Y$ (by Lemma \ref{lem:basic} (\ref{it:poiso})).
Furthermore, if $\p\in Y$, then $A_\p = B_{A\setminus \p}$ (by Lemma \ref{lem:basic} (\ref{it:locoutside})).  Hence we only need  check the extensions $A_\p \subseteq B_{A\setminus\p}$ for $\p \in X$.  Let $T$ be a local ring between $A_\p$ and $B_{A\setminus \p}$ centered on $\p A_\p$, that also satisfies going-down over $A_\p$.  Clearly $T$ contains $JA_\p =JB_{A\setminus \p}$.   Hence we have the chain of rings
$$ (A/J)_\p \subseteq T/JA_\p \subseteq (B/J)_{A\setminus \p} $$
  Then $T/JA_\p$ is a local ring which is  centered on the unique maximal ideal of $(A/J)_\p$ and which satisfies going-down over this ring.  Since $A/J$ is perinormal in $B/J$, we must have $(A/J)_\p =T/JA_\p$.   Since $(A/J)_\p = A_\p/JA_\p$ we conclude that $A_\p = T$, which finishes the proof.
\end{proof}

The above proposition provides many ways to construct perinormal inclusions (and in some cases, new perinormal domains from old).  We provide some cases below.

\begin{cor}\label{cor:poly}
Let $R \subseteq S$ be a perinormal ring inclusion.  Let $X$ be an analytic indeterminate.  Then $R+XS[X]$ is perinormal in $S[X]$, and $R+XS[\![X]\!]$ is perinormal in $S[\![X]\!]$.
\end{cor}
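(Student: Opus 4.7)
The plan is to invoke Proposition~\ref{pr:relative} directly, with a carefully chosen ideal $J$. For the polynomial case, I would set $B = S[X]$, $A = R + XS[X]$, and $J = XS[X]$. The first step is just bookkeeping: $J$ is visibly an ideal of $B$, and $J \subseteq A$ is immediate from the definition of $A$ as $R + XS[X]$. Next I would identify the quotients via the evaluation-at-zero map, getting canonical isomorphisms $A/J \cong R$ and $B/J \cong S$, under which the inclusion $A/J \hookrightarrow B/J$ corresponds to the given inclusion $R \hookrightarrow S$.

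With these identifications, the hypothesis that $R$ is perinormal in $S$ translates exactly to $A/J$ being perinormal in $B/J$. Then Proposition~\ref{pr:relative} yields that $A$ is perinormal in $B$, which is the desired conclusion. The argument for the power series version is word-for-word the same, replacing $X S[X]$ by $X S[\![X]\!]$ and $S[X]$ by $S[\![X]\!]$; the key point is that $X S[\![X]\!]$ remains an ideal of $S[\![X]\!]$ that is contained in $R + X S[\![X]\!]$.

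I do not foresee any real obstacle: the corollary is essentially a direct reading of Proposition~\ref{pr:relative} for the ``conductor square'' presentation of $R + XS[X]$ (resp. $R + XS[\![X]\!]$). The only subtlety worth mentioning explicitly in the writeup is that the phrase ``analytic indeterminate'' ensures $XS[X] \cap R = 0$ (and similarly for the power series), so the quotient identifications $A/J \cong R$ and $B/J \cong S$ really do hold on the nose.
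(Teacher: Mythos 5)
Your proposal is correct and is exactly the argument the paper intends: the corollary is stated without proof immediately after Proposition~\ref{pr:relative}, precisely because it follows by taking $J = XS[X]$ (resp.\ $J = XS[\![X]\!]$) as the conductor ideal and identifying $A/J \cong R$, $B/J \cong S$. Your remark that ``analytic indeterminate'' guarantees $R \cap XS[X] = 0$, so that these identifications are genuine, is a worthwhile sanity check but the route is the same.
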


Next, we will use the above result to prove that certain classical pullback construction yields perinormal domains under reasonable assumptions.  First, we recall said reasonable assumptions:

\begin{thm}\label{thm:equiv}\cite[Theorem 4.7]{nmeSh-peri}
Let $R$ be a universally catenary Noetherian domain with fraction field $K$ and integral closure $S$.  The following are equivalent \begin{enumerate}[label=\emph{(\alph*)}]
\item $R$ is perinormal.
\item For each $\p \in \Spec R$, $R_\p$ is the only ring $S$ between $R_\p$ and $K$ such that the induced map $\Spec S \rightarrow \Spec R_\p$ is an order-isomorphism.
\item $R$ satisfies \Ronei, and for each $\p \in \Spec R$, $R_\p$ is the only ring $S$ between $R_\p$ and $S_\p$ such that the induced map $\Spec S \rightarrow \Spec R_\p$ is an order-isomorphism.
\end{enumerate}
\end{thm}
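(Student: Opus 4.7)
I would establish the three-way equivalence by proving (a) $\Leftrightarrow$ (b) and (b) $\Leftrightarrow$ (c), with the universally catenary Noetherian hypothesis entering critically in the direction (b) $\Rightarrow$ (a).

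The direction (a) $\Rightarrow$ (b) is immediate: any $T$ with $R_\p \subseteq T \subseteq K$ whose contraction $\Spec T \to \Spec R_\p$ is an order-isomorphism is automatically local (unique preimage of the maximal), centered on $\p R_\p$, and the inverse poset map supplies going-down for $R_\p \subseteq T$, so perinormality of $R_\p$ forces $T = R_\p$.

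For (b) $\Leftrightarrow$ (c), the forward direction extracts \Ronei: if some height-one $\p$ of $R$ had $R_\p$ non-regular, then for a height-one prime $P$ of $S$ lying over $\p$, the DVR $S_P$ (well-defined by normality of $S$) would be a proper local overring of $R_\p$ centered on $\p R_\p$ with two-element spectrum order-isomorphic to $\Spec R_\p$, contradicting (b); the remainder of (c) is just (b) restricted to rings inside $S_\p$. For (c) $\Rightarrow$ (b), given $T$ between $R_\p$ and $K$ with order-iso spectrum, I would first show $T \subseteq S_\p$. For each height-one $\q \subseteq \p$ of $R$, the order-iso sends $\q R_\p$ back to some $\q_T \in \Spec T$, and the localization $T_{\q_T}$ is a local overring of the DVR $R_\q$ with two-element spectrum; since any overring of a DVR inside its fraction field is the DVR or the field itself, $T_{\q_T} = R_\q$. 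Intersecting over all such $\q$ yields $T \subseteq \bigcap_\q R_\q = S_\p$ (using \Ronei\ and normality of $S$), so (c) delivers $T = R_\p$.

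The principal difficulty is (b) $\Rightarrow$ (a). Let $L$ be a local overring of $R_\p$ centered on $\p R_\p$ with going-down over $R_\p$; I would show $\Spec L \to \Spec R_\p$ is an order-isomorphism, so (b) forces $L = R_\p$. Surjectivity and order-preservation come directly from going-down applied to saturated chains descending from the maximal of $L$. The hard step is injectivity of contraction. Here the universally catenary Noetherian hypothesis enters via two complementary facts: the Cohen--Seidenberg dimension inequality $\dim L \leq \dim R_\p$ (since $L$ is birational over $R_\p$), and the catenary equality $\height \p' + \dim(R_\p/\p') = \dim R_\p$. A putative collision $\q_1 \subsetneq \q_2$ in $L$ over the same $\p' \in \Spec R_\p$, spliced between a going-down lift of a saturated chain from $\p'$ to $(0)$ below $\q_1$ and a going-down lift from the maximal of $L$ of a saturated chain from $\p R_\p$ to $\p'$ arranged to land at $\q_2$, would produce a chain in $L$ of length $\dim R_\p + 1$, contradicting the Cohen--Seidenberg bound. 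This dimension count, together with the need to fit the lifts compatibly, is where universal catenarity is indispensable.
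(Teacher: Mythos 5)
This statement is Theorem~4.7 of \cite{nmeSh-peri}, and the present paper simply cites it rather than reproving it, so there is no ``paper's own proof'' here to compare your attempt against; I can only assess your outline on its merits.

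Your decomposition into (a)$\,\Leftrightarrow\,$(b) and (b)$\,\Leftrightarrow\,$(c) is reasonable, and (a)$\Rightarrow$(b), the extraction of \Rone\ in (b)$\Rightarrow$(c), and the DVR argument in (c)$\Rightarrow$(b) all point in the right direction. However, the crucial injectivity step in (b)$\Rightarrow$(a) has a genuine gap as written. You propose to produce, above $\q_2$, a chain of length $\dim(R_\p/\p')$ by taking ``a going-down lift from the maximal of $L$ of a saturated chain from $\p R_\p$ to $\p'$ \emph{arranged to land at $\q_2$}.'' Going-down does not let you do this: starting from $\m_L$ and descending along a saturated chain in $R_\p$, going-down hands you \emph{some} chain $\m_L = Q_0 \supseteq \cdots \supseteq Q_r$ with prescribed contractions, but you have no control over which prime $Q_r$ lying over $\p'$ you arrive at, so you cannot force $Q_r = \q_2$. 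Without that, you cannot splice the two chains through $\q_1 \subsetneq \q_2$, and the length count collapses. A cleaner route that avoids the problem: localize $L$ at $\q_2$. Then $L_{\q_2}$ dominates the Noetherian local domain $R_{\p'} = (R_\p)_{\p'}$ birationally, and the dimension inequality for such extensions gives $\hgt_L \q_2 = \dim L_{\q_2} \le \hgt_{R_\p}\p'$; but applying going-down below $\q_1$ to a saturated chain under $\p'$ shows $\hgt_L \q_1 \ge \hgt_{R_\p}\p'$, whence $\hgt_L \q_2 \ge \hgt_L\q_1 + 1 \ge \hgt_{R_\p}\p' + 1$, a contradiction. (Note also that the birational dimension inequality you want is not the classical Cohen--Seidenberg package for integral extensions, which does not apply to overrings; and your passing claim $\bigcap_{\q} R_\q = S_\p$ in (c)$\Rightarrow$(b) secretly requires that every height-one prime of $S$ surviving in $S_\p$ contract to a height-one prime of $R$, which is exactly where the universally catenary/dimension-formula hypothesis is doing work, not merely \Rone\ and normality of $S$.)
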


Recall that by definition, a commutative ring $R$ satisfies \Rone\ if for every height one prime $\p$ of $R$, $R_\p$ is a (not necessarily discrete) valuation domain.

\begin{cor}\label{cor:polyabs}
Let $R$ be a Noetherian, universally catenary, 
perinormal integral domain, whose integral closure $S$ is finitely generated as an $R$-module.  Let $X$ be an analytic indeterminate.  Then $R+XS[X]$ and $R+XS[\![X]\!]$ are perinormal.
\end{cor}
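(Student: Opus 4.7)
The plan is to derive this corollary by combining Corollary~\ref{cor:poly} with the perinormality of $S[X]$ (respectively $S[\![X]\!]$), via a case analysis on primes of $R' := R+XS[X]$. First, since $R$ is perinormal in $K := \Frac R$ and $S \subseteq K$, the definition of relative perinormality restricts immediately from $K$ to the intermediate extension $S$, yielding that $R$ is perinormal in $S$. Corollary~\ref{cor:poly} then gives that $R'$ is perinormal in $S[X]$ (and analogously for the power series case).

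Second, I would verify that $S[X]$ is itself perinormal as a domain. Being module-finite over the Noetherian universally catenary $R$, $S$ is itself Noetherian and universally catenary; being the integral closure of $R$ in $K$, $S$ is normal. Consequently $S[X]$ is a Noetherian, universally catenary, normal domain, so it coincides with its own integral closure and satisfies \Rone. Condition (c) of Theorem~\ref{thm:equiv} is then vacuously satisfied (the only ring between $S[X]_\mathfrak{q}$ and itself is $S[X]_\mathfrak{q}$), giving perinormality of $S[X]$. The same reasoning, using that $S[\![X]\!]$ is again Noetherian normal universally catenary, yields perinormality of $S[\![X]\!]$.

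Finally, let $\p \in \Spec R'$ and let $T$ be a local overring of $R'_\p$ in $K(X) = \Frac R'$, centered on $\p R'_\p$ and satisfying going-down over $R'_\p$; the goal is to show $T = R'_\p$. If $X \notin \p$, then $XS[X] \not\subseteq \p$, so Lemma~\ref{lem:basic}(\ref{it:locoutside}) identifies $R'_\p$ with $S[X]_Q$ for some $Q \in \Spec S[X]$, and perinormality of $S[X]$ forces $T = R'_\p$. If $X \in \p$, the crux is to show $T \subseteq S[X]_{R' \setminus \p}$, after which the perinormality of $R'$ in $S[X]$ from the first step delivers $T = R'_\p$. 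For this, I would exploit that $S[X]$ is integral and module-finite over $R'$: the extension $T \subseteq T[S]$ is then finite, so $T[S]$ is semi-local; localizing at a maximal ideal above $\mathfrak{M}_T$ produces a local ring $\tilde T$ that dominates $S[X]_\mathfrak{Q}$ for some $\mathfrak{Q}$ lying over $\p \cap R$, and the going-down hypothesis on $T$, together with the normality of $S[X]$ (invoking the classical going-down theorem for integral extensions of normal domains), should pin the full configuration inside $S[X]_{R' \setminus \p}$. The main obstacle is precisely this last step in the case $X \in \p$: transferring going-down through the integral extension $R' \subseteq S[X]$ so that the relative perinormality delivered by Corollary~\ref{cor:poly} becomes applicable.
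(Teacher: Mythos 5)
Your approach is genuinely different from the paper's, and the difference is exactly where the gap lies. The paper does \emph{not} argue directly from the definition of perinormality. Instead, it (i) verifies that $R' := R+XS[X]$ and $R+XS[\![X]\!]$ are Noetherian and universally catenary (using the Artin--Tate lemma in the polynomial case and a theorem of Seydi on power series rings in the other case), (ii) checks that $R'$ satisfies \Ronei, and (iii) then invokes Theorem~\ref{thm:equiv}(c): since $R'$ is perinormal in its integral closure $S'$ via Corollary~\ref{cor:poly} and is \Ronei\ and universally catenary, that theorem yields full perinormality. The whole point of condition (c) is that it spares you from having to show, by hand, that a going-down local overring centered on the closed point is actually contained in the localization of the integral closure.

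That containment is precisely your ``crux'' in the case $X\in\p$, and you do not prove it --- you openly flag it as ``the main obstacle.'' The plan of passing to $T[S]$, taking a maximal ideal over $\m_T$, and citing going-down for integral extensions of normal domains does not obviously close the gap: it is not clear why the resulting local ring would force $T$ itself into $S[X]_{R'\setminus\p}$, and without \Ronei\ or a height/dimension argument of the kind used in the proof of Theorem~\ref{thm:equiv} there is no mechanism preventing $T$ from escaping the integral closure. (In general, going-down alone does not confine a local overring to the normalization; that is what the \Ronei\ hypothesis is for.) Moreover, to use Theorem~\ref{thm:equiv} one needs $R'$ to be universally catenary, and you never establish this --- it is a substantive point, particularly for $R+XS[\![X]\!]$ where the paper has to reach for Seydi's result. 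So while the first two ingredients of your proposal (relative perinormality of $R'$ in $S'$ via Corollary~\ref{cor:poly}, and the easy case $X\notin\p$ via Lemma~\ref{lem:basic}) are sound and in the spirit of the paper, the essential final step is missing, and the clean way to supply it is the paper's route through universal catenarity, \Ronei, and Theorem~\ref{thm:equiv}(c).
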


\begin{proof}
First we show that the rings in question are themselves universally catenary.  Let $s_1=1, s_2, \ldots, s_n$ be a generating set for $S$ as an $R$-module.  For $A=R+XS[X]$, note that $S[X]$ is generated as an $A$-module by the same set of generators that generates $S$ as an $R$-module.  Then since $S[X]$ is finitely generated over $R$, the Artin-Tate lemma \cite{ArTa-lemma} shows that $A$ must be finitely generated over $R$, and hence universally catenary and Noetherian.  Next, let $B=R+XS[\![X]\!]$.  Observe that $B \cong R[\![T_1, \ldots, T_n]\!] / (T_1-s_1X, \ldots, T_n - s_n X)$.   By \cite[Th\'eor\`eme 1.12]{Sey-chain}, $R[\![T_1, \ldots, T_n]\!]$ is universally catenary; hence its quotient ring $B$ must be as well.

Let $S' = S[X]$ (resp. $S[\![X]\!]$) and $T := R+P$, where $P=XS'$.  We claim that $T$ is \Rone.  By Lemma~\ref{lem:basic}(\ref{it:locoutside}), we need only check the height one primes of $T$ that contain $P$.  But $P$ is itself a height one prime, so we need only check that $T_P$ is a valuation ring, which follows since $T_P = S'_P$ and $P$ is a height one prime of the Krull domain $S'$, proving the claim.


Since $T$ is perinormal in its integral closure (by Corollary~\ref{cor:poly}) and \Rone, it satisfies condition (c) of Theorem~\ref{thm:equiv}.  But then since $T$ is universally catenary, said theorem implies that $T$ is perinormal.
\end{proof}

\begin{thm}[The classical $D+M$ construction, and more]\label{thm:d+m}
Let $D$ be an integral domain, $k=\Frac D$ its fraction field, and $V$ a valuation domain with residue field $k$.  Then the pullback $R$ of the diagram $D \rightarrow k \leftarrow V$ is perinormal \emph{if and only if} $D$ is perinormal.
\end{thm}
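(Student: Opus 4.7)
For the ``if'' direction, the strategy is to apply Proposition~\ref{pr:relative} to the extension $R \subseteq V$ with $J = M$ (the maximal ideal of $V$, which sits inside $R = D+M$): since $R/M = D$ is perinormal in $V/M = k$, we deduce that $R$ is perinormal in $V$. What remains is to upgrade this to perinormality of $R$ in $\Frac(R) = \Frac(V)$, the latter equality being a standard fact for $D+M$ constructions. Given any $\p \in \Spec R$ and any local overring $L$ of $R_\p$ in $\Frac(V)$ centered on $\p R_\p$ with going-down, I split on whether $\p \supseteq M$. If $\p \not\supseteq M$, then Lemma~\ref{lem:basic}(\ref{it:locoutside}) gives $R_\p \cong V_P$ for the unique $P$ of $V$ lying over $\p$; any overring of the valuation domain $V_P$ is a further localization $V_Q$, and centering on $PV_P$ forces $Q = P$. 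If $\p \supseteq M$, then $R \setminus \p$ consists of units of $V$ (since $M \subseteq \p$), so $V_{R \setminus \p} = V$; the containment $M \subseteq \p R_\p \subseteq \m_L$ then forces $L \subseteq V$: any $\ell \in L \setminus V$ would have $\ell^{-1} \in M \subseteq \m_L$, a nonunit of $L$ whose inverse nevertheless lies in $L$, impossible. Now $L \subseteq V$ together with the relative perinormality of $R$ in $V$ yields $L = R_\p$.

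For the ``only if'' direction, suppose $R$ is perinormal, and let $L'$ be a local overring of $D$ in $k$ centered on some $\q \in \Spec D$ and satisfying going-down over $D$. I construct $L := \{v \in V : \bar v \in L'\}$, the pullback of $L' \to k \leftarrow V$, which satisfies $R \subseteq L \subseteq V$ and $L/M = L'$. Using Lemma~\ref{lem:basic}(\ref{it:poiso}) for this new pullback, the primes of $L$ split into those containing $M$ (corresponding bijectively to $\Spec L'$) and those not containing $M$ (corresponding to primes of $V$ strictly below $M$, all strictly contained in the preimage of $\m'$); hence $L$ is local with maximal ideal contracting in $R$ to $\p' := \q + M$. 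The technical heart is to verify going-down for $L/R$ by case analysis on whether the two target primes $\q_1 \subseteq \q_2$ in $R$ contain $M$: when both do, invoke going-down of $L'/D$ and lift the resulting prime of $L'$ back into $L$; when neither does, use the Lemma~\ref{lem:basic}(\ref{it:poiso}) bijections to transfer the question into the totally ordered spectrum of $V$, where it is immediate; the mixed case reduces to one of these since $\q_1 \supseteq M$ forces $\q_2 \supseteq M$. Once going-down for $L/R$ is established, perinormality of $R$ forces $L = R_{\p'}$, and Lemma~\ref{lem:basic}(\ref{it:loc}) identifies $R_{\p'}$ with the pullback of $D_\q \to k \leftarrow V$; modding out by $M$ then yields $L' = L/M = D_\q$, as required.

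I expect the main obstacle to be the going-down verification for $L/R$ in the ``only if'' direction, since it requires simultaneously tracking prime correspondences across two pullback squares (one for $R$, one for $L$) and combining the given going-down of $L'/D$ with the linear ordering of primes in $V$. The rest of the argument is a comparatively clean packaging of Proposition~\ref{pr:relative} and the valuation-domain observation that containment of $M$ in $\m_L$ prevents $L$ from escaping $V$.
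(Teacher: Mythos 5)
Your argument is correct and follows essentially the same route as the paper: the ``if'' direction combines Proposition~\ref{pr:relative} (to get $R$ perinormal in $V$) with a case split on whether $\p \supseteq M$, using the valuation-domain trapping argument for the $\p\supseteq M$ case; the ``only if'' direction builds the pullback $L$ of $T'\to k\leftarrow V$, verifies it is local and going-down over $R$, and then extracts that $L' $ must be a localization of $D$. The only differences are cosmetic: the paper phrases ``only if'' in the contrapositive, and it case-splits the going-down verification on $\q_1$ alone (if $\q_1\not\supseteq M$ it is already a prime shared by $R$ and $L$, and $\q_1\subseteq\q_2\subseteq Q_2$ immediately suffices), which at once handles both your ``neither'' and ``mixed'' cases and is slightly cleaner than the three-way split you sketched, where the mixed case $\q_1\not\supseteq M\subseteq\q_2$ actually needs the extra observation that $M\subseteq Q_2$ rather than a direct appeal to the total order on $\Spec V$.
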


\begin{proof}
Let $\m = \ker (V \rightarrow k)$, the conductor of the pullback square.

First suppose $D$ is not perinormal.  Let $(T,\n)$ be a local overring of $D$ such that $D \subseteq T$ is going-down and $T$ is not a localization of $D$.  Let $S$ be the pullback of the diagram $T \rightarrow k \leftarrow V$, thought of as a subring of $V$ that contains $R$.  Then $S$ is an overring of $R$ (since $V$ is) which is local with unique maximal ideal the preimage of $\n$.  To see that the extension $R \subseteq S$ satisfies going-down, let $P_1 \subseteq P_2$ be primes of $R$ and $Q_2$ a prime of $S$ with $Q_2 \cap R = P_2$.  If $P_1 \nsupseteq \m$, then by Lemma \ref{lem:basic}(\ref{it:poiso}), $P_1=Q_1\cap R$ for some $Q_1 \in \Spec V \setminus \{\m\}$.   As $V$ is local, we have $Q_1\subset \m$, so in fact $Q_1 =P_1$ is an ideal of $R$ and hence of $S$.     In particular, $P_1$ is a prime of $S$, solving the going-down problem.  On the other hand, if $P_1 \supseteq \m$, let $\p_j := P_j / \m \in \Spec D$ for $j=1,2$ and let $\q_2 := Q_2 /\m \in \Spec T$.  Since $D \subseteq T$ satisfies going-down, there is some $\q_1 \in \Spec T$ with $\q_1 \subseteq \q_2$ and $\q_1 \cap D =\p_1$.  But setting $Q_1$ to be the preimage of $\q_1$ in $S$, we have $Q_1\subseteq Q_2$ and $Q_1 \cap R = P_1$, completing the proof that  $S$ satisfies going-down over $R$. Hence to show that $R$ is not perinormal, it suffices to show that $S$ is not a localization of $R$.    Suppose otherwise, say $S=R_W$ for some multiplicatively closed set $W\subset R$.  Since $\m$ is a proper ideal of both $R$ and $S$, it follows that $\m\cap W=\emptyset$.  Thus if $W'$ is the image of $W$ in $D=R/\m$, we would have $T=D_{W'}$, contradicting our choice of $T$.  Thus $R$ is not perinormal.

Conversely, suppose $D$ is perinormal.  Let $(T,\n)$ be a going-down local overring of $R$.  If $\n \cap R \nsupseteq \m$, then by Lemma \ref{lem:basic} (\ref{it:poiso}) and (\ref{it:locoutside}), $R_{\n \cap R}$ is in fact a localization of $V$, and hence also a valuation ring.  Thus the overring $T$ must be a localization of $R_{\n \cap R}$ and therefore of $R$.  So we may assume that $\m \subseteq \n$.  In this case, we claim that $T \subseteq V$.  If not, then let $a\in T \setminus V$.  Then since $V$ is a valuation ring, $b=a^{-1} \in V$.  But since $a=b^{-1} \notin V$, $b$ is a nonunit of $V$, whence $b \in \m \subseteq \n \subseteq T$.  Since also $a=b^{-1}\in T$, we have that $b$ is a unit of $T$ contained in its maximal ideal $\n$, which is absurd.  Hence, $R \subseteq T \subseteq V$ and $T$ is local and going-down over $R$.  Then since $R$ is perinormal in $V$ (by Proposition~\ref{pr:relative}, since $D$ is perinormal in $k$), it follows that $T$ is a localization of $R$.
\end{proof}

\begin{rmk}
The above subsumes the classical $D+M$ construction (see for example \cite[Appendix 2]{Gil-old}).  There, one starts with a valuation domain $V$ that contains a field $k$ as an algebra retract, so that $V=k+M$; then one takes a subring $D \subseteq k$ and sets $R=D+M$.   Recall that $D+M$ is never Noetherian unless $D$ is a field and $k$ is finite dimensional over $D$. (cf. \cite[Theorem A(m), p. 562]{Gil-old}).

However, the above theorem covers additional cases.  Recall the theorem of Hasse and Schmidt (cf. \cite[Theorem 29.1]{Mats}) that says that for any prime characteristic field $k$, there is a characteristic zero rank one discrete valuation ring $V$ (which necessarily contains no field at all) whose residue field is $k$.  So using the two theorems together, one may start with one's favorite prime characteristic domain $D$, let $k$ be its fraction field, use Hasse-Schmidt to find a characteristic zero DVR $V$ whose residue field is $k$, and form the pullback $R$.  Then $R$ is perinormal if and only if $D$ is.
\end{rmk}

\section{Pullbacks of fragile subrings}\label{sec:fragile}

Later, after we have one of our main results on pullbacks, we will construct an example to show that the converse to Proposition~\ref{pr:relative} is false (see Example~\ref{ex:badpullback}).   First we develop some basic results as prelude to our next theorem.

\begin{lemma} \label{lem:goingdown} Let $f: R \rightarrow S$ and $g: S\rightarrow T$ be ring maps and let $J$ be an ideal of $S$.    Suppose that the induced map $\Spec S \rightarrow \Spec R$ is injective  on $\Spec S\setminus V(J)$ and that the image of the induced map $\Spec T \rightarrow \Spec S$ is contained in $\Spec S\setminus V(J)$.  If $g\circ f: R \rightarrow T$ satisfies going-down, so does $g:S \rightarrow T$.
\end{lemma}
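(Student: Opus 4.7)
The goal is to verify the going-down property for $g\colon S\to T$ directly: given primes $Q_1\subseteq Q_2$ of $S$ and a prime $P_2$ of $T$ with $P_2\cap S=Q_2$, I must produce $P_1\subseteq P_2$ in $\Spec T$ with $P_1\cap S=Q_1$. The natural move is to pull everything back to $R$ via $f$ and exploit the going-down hypothesis on $g\circ f$.

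So first I would set $\p_i:=Q_i\cap R$ for $i=1,2$, noting that $P_2\cap R=\p_2$ by functoriality. Since $g\circ f$ satisfies going-down, there exists $P_1\in\Spec T$ with $P_1\subseteq P_2$ and $P_1\cap R=\p_1$. This is the candidate; the only thing left is to show $P_1\cap S=Q_1$.

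Let $Q_1':=P_1\cap S$. Then $Q_1'\subseteq P_2\cap S=Q_2$ and $Q_1'\cap R=P_1\cap R=\p_1=Q_1\cap R$, so $Q_1$ and $Q_1'$ are two primes of $S$ sitting inside $Q_2$ and having the same contraction to $R$. Now the hypothesis that the image of $\Spec T\to\Spec S$ avoids $V(J)$ gives $J\not\subseteq Q_2$, and since $Q_1,Q_1'\subseteq Q_2$, neither $Q_1$ nor $Q_1'$ contains $J$ either; that is, both lie in $\Spec S\setminus V(J)$. By the injectivity hypothesis on $\Spec S\to\Spec R$ restricted to $\Spec S\setminus V(J)$, we conclude $Q_1=Q_1'$, finishing the proof.

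There is really no obstacle here once one writes down what needs to be shown: the argument is just ``lift via going-down of the composite, then use injectivity to identify the contraction.'' The only subtle point worth stating carefully is the observation that $Q_1'$ (a priori only controlled by being contained in $Q_2$) automatically avoids $V(J)$ because $Q_2$ does, which is what makes the injectivity hypothesis applicable.
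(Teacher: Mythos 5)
Your proof is correct and follows essentially the same route as the paper's: lift the chain to $R$, apply going-down for $g\circ f$ to get the candidate prime of $T$, and then use the injectivity of $\Spec S\to\Spec R$ off $V(J)$ to identify its contraction to $S$ with $Q_1$. Your explicit remark that $Q_1$ and $Q_1'$ avoid $V(J)$ because they sit inside $Q_2$ is exactly the (unstated) justification behind the paper's word ``Clearly.''
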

\begin{proof} Let $P_1\subset P_2$ be prime ideals of $S$ and let $Q_2$ be a prime ideal of $T$ with $g^{-1}(Q_2) =P_2$.  Let $\p_i=f^{-1}(P_i)$ for $i=1,2$.    By the going-down hypothesis on the map $g \circ f$, there is a prime ideal $Q_1$ of $T$ with $Q_1 \subseteq Q_2$ and $(g\circ f)^{-1}(Q_1) = \p_1$.   Clearly $g^{-1}(Q_1), P_1, P_2\in \Spec S\setminus V(J)$.  But then we have $f^{-1}(P_1) = \p_1 = (g \circ f)^{-1}(Q_1) = f^{-1}(g^{-1}(Q_1))$, so by injectivity of the map in question, we have $P_1 = g^{-1}(Q_1)$, completing the proof.
\end{proof}

We introduce the next definitions so as to simplify the statement of the main theorem of the current section.  They  may also be of independent interest.

\begin{defn}\label{def:fragile}
Let $A \subseteq B$ be an integral extension of commutative rings.  We say that the extension is \begin{itemize}
\item \emph{apparently fragile} (or \emph{a.f.}) if for any ring $C$ with $A \subsetneq C \subseteq B$, there exists some minimal prime $\p$ of $A$ and primes $P,Q \in \Spec C$ with $P \neq Q$ and $P \cap A = Q\cap A = \p$.
\item \emph{fragile} if $A_P \subseteq B_{A \setminus P}$ is apparently fragile for any $P \in \Spec A$.
\item \emph{globally fragile} if $A_W \subseteq B_W$ is apparently fragile for any multiplicative subset $W$ of $A$.
\end{itemize}
\end{defn}

Note that in all the above definitions, one allows the vacuous case $A=B$.


Next, recall the following definition from \cite[\S 35]{Gil-old}, where for a commutative ring $R$, we set $\Spec^1(R) := \{\p \in \Spec R \mid \hgt \p=1\}$:
\begin{defn}
An integral domain $R$ is a \emph{generalized Krull domain} if 
\begin{enumerate}
\item $R = \bigcap_{\p \in \Spec^1(R)} R_\p$,
\item For any nonzero element $r\in R$, the set $\{\p \in \Spec^1(R) \mid r\in \p\}$ is finite, and
\item $R$ satisfies \Rone.
\end{enumerate}
%
\end{defn}


\begin{lemma} \label{lem:relperi} Let $S$ be a generalized Krull domain and let $J$ be an ideal of $S$ where $J$ has finitely many minimal primes, all of height at least two.   Let $A$ be a subring of $S/J$ over which $S/J$ is integral.  Let $R$ be the pullback of $A\to S/J \leftarrow S$.  Then
  \begin{enumerate}
 \item The Spec map induces a bijection $ \Spec^1S\to \Spec^1R$ where the corresponding localizations coincide. In particular $R$ satisfies \Ronei.
 \item If the extension $A \subseteq S/J$ is apparently fragile, then no ring  between $R$ and $S$ satisfies going-down over $R$, other than $R$ itself.
 \end{enumerate}
\end{lemma}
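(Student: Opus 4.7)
My plan for part~(1) is to invoke Lemma~\ref{lem:basic} directly. Part~(\ref{it:poiso}) of that lemma gives a poset isomorphism $\Spec S \setminus V(J) \to \Spec R \setminus V(J)$ via contraction, and part~(\ref{it:locoutside}) yields the equality of localizations $R_{P \cap R} = S_P$ for any $P \notin V(J)$. What remains is to check that every height-one prime of $S$ and every height-one prime of $R$ lies outside $V(J)$, so that this bijection restricts to a bijection $\Spec^1 S \to \Spec^1 R$. For $S$ this is immediate from the hypothesis on minimal primes of $J$. For $R$, I will show that every minimal prime $\p$ of $J$ in $R$ has height at least two: since $S/J$ is integral over $A$, lift $\p/J$ to a minimal prime of $S/J$, obtaining a minimal prime $P$ of $J$ in $S$ with $P \cap R = \p$; then transport a chain of length two strictly below $P$ in $S$ (which lies entirely outside $V(J)$) to a corresponding chain below $\p$ in $R$ through the poset isomorphism. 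The \Rone\ property of $S$ as a generalized Krull domain then passes to $R$ via the equalities of localizations.

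For part~(2) I will argue by contradiction: suppose $R \subsetneq C \subseteq S$ with $R \subseteq C$ satisfying going-down. Since $J \subseteq R \subseteq C$, one has $A = R/J \subseteq C/J$; moreover $A = C/J$ would force $C = R + J = R$, so $A \subsetneq C/J \subseteq S/J$. Apparent fragility of $A \subseteq S/J$, applied to $C/J$, produces a minimal prime $\bar\p$ of $A$ and two distinct primes $\bar P, \bar Q \in \Spec(C/J)$ lying over $\bar\p$. Lifting yields a minimal prime $\p$ of $J$ in $R$ (of height $\geq 2$ by the part~(1) argument) and distinct primes $P, Q \in \Spec C$ containing $J$ with $P \cap R = Q \cap R = \p$. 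By incomparability for the integral extension $A \subseteq C/J$, both $P$ and $Q$ are minimal over $J$ in $C$.

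Next I will choose a height-one prime $\p_1 \subsetneq \p$ in $R$; such $\p_1$ exists because $\hgt \p \geq 2$, and it lies outside $V(J)$. Applying going-down of $R \subseteq C$ to the chain $\p_1 \subsetneq \p$, I obtain primes $P_1 \subsetneq P$ and $Q_1 \subsetneq Q$ of $C$ contracting to $\p_1$. Since $\p_1 \notin V(J)$, and by part~(1) together with the sandwich $R \subseteq C \subseteq S$ (which gives $R_{\p_1} = C_{R \setminus \p_1} = S_{R \setminus \p_1}$), the fibre of $\Spec C \to \Spec R$ over $\p_1$ is a single point, so $P_1 = Q_1 =: N$. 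Thus in $C$ we have $N \subsetneq P$ and $N \subsetneq Q$ with $P$ and $Q$ distinct.

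The hard part will be converting this configuration into a direct contradiction. My intended finish is to apply Lemma~\ref{lem:goingdown} to a judiciously chosen three-term composition involving $R \to C$ and either a localization of $S$ at a prime outside $V(J)$ or a quotient of $C$, chosen so that the image-and-injectivity hypotheses relative to $J$ both hold and the uniqueness of lifts established in part~(1) is transported all the way up to $\p$, forcing $P = Q$ and contradicting our assumption. A natural alternative is to iterate going-down along the full chain below $\p$, producing enough common sub-primes of $P$ and $Q$ that the generalized Krull structure of $S$ (through the minimal primes of $J$ in $S$ above $\p$ lying over $P$ and $Q$) forces their collapse. Either way, the technical crux is to leverage the height $\geq 2$ assumption on the minimal primes of $J$ to obstruct the splitting $P \neq Q$ that apparent fragility provides.
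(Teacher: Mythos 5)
Your treatment of part~(1) is correct and in line with the paper, modulo a small change of emphasis: instead of arguing directly that every height-one prime of $R$ lies under a height-one prime of $S$ (the paper's route via lying-over and INC), you show that the minimal primes of $J$ in $R$ all have height $\geq 2$, which then forces $\Spec^1 R \cap V_R(J) = \emptyset$; combined with the poset isomorphism of Lemma~\ref{lem:basic}(\ref{it:poiso}) (which is height-preserving on the complements of $V(J)$), this gives the bijection and the equality of localizations. Both routes are sound and of comparable length.

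For part~(2) your setup is correct up through the point where you produce $N \subsetneq P$, $N \subsetneq Q$, with $P \neq Q$ in $\Spec C$, but there is a genuine gap precisely at the step you flag as ``the hard part.'' Having $P$ and $Q$ share a single height-one sub-prime $N$ (or even share \emph{all} height-one sub-primes) is not contradictory: two primes of height $\geq 2$ can easily have the same set of primes below them and still be distinct. A ``judicious three-term composition'' of Lemma~\ref{lem:goingdown} cannot rescue this, because the image hypothesis in that lemma requires all relevant primes of the middle ring to lie \emph{outside} $V(J)$, while $P$ and $Q$ both contain $J$. What the paper's argument actually requires is a prime that \emph{separates} $P$ and $Q$ rather than one they have in common: one must pass up to $S$ (integral over $C$), take a minimal prime $Q_1$ of $J$ in $S$ lying over $P$ and the finitely many minimal primes $Q_2, \ldots, Q_r$ lying over $Q$, and then — using that $S_{Q_1}$ and $S_{S\setminus\bigcup_{j\geq 2}Q_j}$ are generalized Krull domains together with prime avoidance — find a height-one prime of $S$ contained in $Q_1$ but in none of the $Q_j$. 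That choice produces a height-one prime $\q \subsetneq \p$ in $R$ such that the \emph{unique} prime of $C$ over $\q$ (unique by part~(1) applied to the pair $C \subseteq S$) is not contained in $Q$, which is then incompatible with going-down applied to $\q \subsetneq \p$ and $Q$. Without this separating prime, and without the use of the finiteness of $\Min_S(J)$, the contradiction does not materialize; that is the missing content in your sketch.
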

\begin{proof} (1)
By Lemma \ref{lem:basic}(\ref{it:poiso}) the contraction map $\Spec S \to \Spec R$ gives an order preserving bijection between the prime ideals of $S$ that do not contain $J$ and the prime ideals of $R$ that do not contain $J$.  Since no height one prime of $S$ contains $J$, every height one prime of $S$ contracts to a height  one prime of $R$.
  Moreover  by integrality each height one prime of $R$ is lain over by a prime of $S$ (necessarily of height one by INC).   Thus the Spec map induces a bijection $ \Spec^1S\to \Spec^1R$ where, by Lemma \ref{lem:basic} (\ref{it:locoutside}), the corresponding localizations coincide.  The ``in particular" statement now follows since $S$ is a generalized Krull domain.

(2) Let $T$ be a ring with $R \subsetneq T \subseteq S$ that  satisfies going-down over $R$.   Then $J$ is an ideal of $T$ and $S$ is integral over $T$.  By assumption, there is some minimal prime $\p$ of $J$ in $R$ such that there exist distinct primes $P_1, P_2 \in \Spec T$ with $P_1 \cap R = P_2 \cap R = \p$, $i=1, 2$.  By integrality there exists a prime ideal $Q_1$ of $S$ that  contracts to $P_1$, which by INC must be a minimal prime over $J$.   Similarly, note that each  prime ideal of $S$ that contracts to $P_2$  must be minimal over $J$, whence the set of such primes is finite (by our assumption on $J$).  Say $Q_2,...,Q_r$ is a complete list of all primes of $S$ contracting to $P_2$.   We first claim that there exists a height one prime ideal of $S$ that is contained in $Q_1$, but not in any of the $Q_i$, $i=2,3,...,r$.  To see this note that both $S_{Q_1}$ and $S_W$, where $W=S\setminus \bigcup_{j=2}^r Q_j$, are generalized Krull domains.  Let $X$ be the set of height one primes of $S$ contained in $Q_1$ and let $Y$ be the set of height one primes of $S$ contained in $\bigcup_{j=2}^r Q_j$.  Hence $S_{Q_1} =\bigcap_{Q\in X} S_Q$ and $S_W= \bigcap_{P\in Y} S_P$.   Therefore, if the claim is false, i.e., if  $X\subseteq Y$, it follows that $S_W \subseteq S_{Q_1}$.   This in turn implies that $Q_1 \subset \bigcup_{j=2}^r Q_j$.   By the prime avoidance theorem, we can conclude that $Q_1\subseteq Q_j$ for some $j\geq 2$.    But this
contradicts our choice of the $Q_i$'s, whence the claim is proved.

Let $Q\in X\setminus Y$.  By (1), $Q$ is the unique prime ideal of $S$ that contracts to  $\q = Q\cap R\subset \p$.     We next claim that $Q\cap T$ is not contained in $P_2$.  If it were, then by going-up $Q$ is contained in a prime ideal of $S$ that contracts to $P_2$, a contradiction to our choice of $Q$. As $J$ is an ideal of $T$,  it follows from part (1) that the Spec map induces a bijection $\Spec^1S\to \Spec^1T$, which means that we also have a bijection $\Spec^1T\to \Spec^1R$.  Thus there is no prime ideal of $T$ that is contained in $P_2$ which contracts to $\q$.  But this 
contradicts the going-down property of $R\subset T$, since $P_2\cap R = \p$.  Thus $T$ cannot exist.
\end{proof}


We are ready for the main theorem of the current section.

\begin{thm} [Pullbacks of fragile subrings] \label{thm:fragilepullback} Let $S$ be a generalized Krull domain and let $J$ be an ideal of $S$ where $J$ has $n$  minimal primes with $n\geq 2$.   Assume further that $J$ has height at least two.   Let $A$ be an integral fragile subring of $S/J$.     If $R$ is the pullback $A\to S/J \leftarrow S$, then $R$ is perinormal.  
\end{thm}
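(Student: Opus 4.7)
My plan is to verify perinormality directly: let $(T,\n)$ be a local overring of $R$ satisfying going-down over $R$, set $\p := \n \cap R$, and aim to prove $T = R_\p$.

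I would start by localizing at $W := R \setminus \p$. By Lemma~\ref{lem:basic}(\ref{it:loc}), $R_\p$ is the pullback of $A_{\overline W} \to (S/J)_{\overline W} \leftarrow S_W$, and $T$ remains a local going-down overring of $R_\p$ centered on $\p R_\p$. Replacing the original data with their localizations, I may assume that $R$ is local with maximal ideal $\p$, $S$ is still a generalized Krull domain, $J$ is an ideal of $S$ whose minimal primes still have height at least two, and $A$ is integral in $S/J$. If $\p \not\supseteq J$, then $J$ meets $W$, so $(S/J)_{\overline W} = 0$ and the pullback collapses; combined with Lemma~\ref{lem:basic}(\ref{it:locoutside}), this yields $R = S_W = S_P$ for the unique prime $P$ of $S$ above $\p$, so after the reduction $R = S$. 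If $\p \supseteq J$, then $\overline W = A \setminus \overline \p$, and the fragility of $A$ gives apparent fragility of the inclusion $A_{\overline \p} \subseteq (S/J)_{A \setminus \overline \p}$, so the localized pullback satisfies every hypothesis of Lemma~\ref{lem:relperi}(2).

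The crux is to show $T \subseteq S$. Assume for contradiction that some $a \in T$ lies outside $S$. Because $S$ is a generalized Krull domain, $S = \bigcap_{\q \in \Spec^1 S} S_\q$, so there exists a height-one prime $\q$ of $S$ with $a \notin S_\q$. By Lemma~\ref{lem:relperi}(1), $\q' := \q \cap R$ is a height-one prime of $R$ with $R_{\q'} = S_\q$; since $R$ is local, $\q' \subseteq \p$. Going-down for $R \subseteq T$ supplies a prime $\tilde Q \subseteq \n$ of $T$ with $\tilde Q \cap R = \q'$, so $T_{\tilde Q}$ is an overring of the rank-one valuation ring $R_{\q'} = S_\q$. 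Since the only overrings of such a valuation ring inside its fraction field are itself and the fraction field, and $\tilde Q \neq 0$ (as $\q' \neq 0$), we must have $T_{\tilde Q} = S_\q$. But then $a \in T \subseteq T_{\tilde Q} = S_\q$, a contradiction.

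With $T \subseteq S$ in hand, the case $\p \not\supseteq J$ is immediate, since the reduction gave $R = S$ and thus $T = R$. In the case $\p \supseteq J$, Lemma~\ref{lem:relperi}(2) applied to the apparently fragile localized pullback forbids any ring strictly between $R$ and $S$ from being going-down over $R$, so $T = R$ again. Undoing the localization yields $T = R_\p$ as required. The main obstacle I anticipate is the inclusion $T \subseteq S$: since $T$ need not be integral over $R$, the argument has to leverage the \Rone\ property inherited by $R$ via Lemma~\ref{lem:relperi}(1) together with going-down to trap every element of $T$ inside each of the height-one valuation rings whose intersection is $S$.
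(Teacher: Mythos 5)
Your proof is correct, and its overall architecture matches the paper's: a case split on whether $J$ is contained in $\n \cap R$, with the nontrivial case resolved via Lemma~\ref{lem:relperi}(2). The difference lies in how you establish the key containment $T \subseteq S$ (after localization). The paper appeals to [ES16, Proposition~3.9 and Lemma~3.7] to produce a bijection $\Spec^1 T \to \Spec^1 R_{\n\cap R}$ with matching localizations, and then writes $T \subseteq \bigcap_{P\in\Spec^1 T} T_P = \bigcap_{Q\in\Spec^1 S_W} S_Q = S_W$. You instead argue directly: for each height-one $\q$ of $S$, Lemma~\ref{lem:relperi}(1) identifies $R_{\q\cap R}$ with the rank-one valuation ring $S_\q$, going-down produces a prime $\tilde Q$ of $T$ lying over $\q\cap R$, and the rigidity of overrings of a rank-one valuation domain forces $T_{\tilde Q} = S_\q$ (since $T_{\tilde Q}$ is not a field). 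This avoids the external citations and is arguably more transparent, essentially re-deriving in situ the relevant content of the cited results. Your front-loading of the localization at $\p = \n\cap R$ is a cosmetic reorganization of the paper's ``$JT = T$'' versus ``$JT \neq T$'' dichotomy; in the former case you correctly observe that the pullback collapses to $R_\p = S_P$, so $T \subseteq S$ finishes immediately without needing the perinormality of $S$ (the paper invokes it; your version is slightly more economical there). One small elision: in the case $\p \not\supseteq J$, Lemma~\ref{lem:relperi}(1) doesn't literally apply to the localized data (since $J_W$ is the unit ideal), but $R_\p = S_W$ directly so the valuation argument degenerates to the obvious statement $T \subseteq \bigcap_{\q\in\Spec^1 S_W} S_\q = S_W$; worth making that explicit.
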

\begin{proof} First observe that $S$ is integral over $R$.  To see this, let $s\in S\setminus R$ and let $t\in S/J$ be its homomorphic image.  Then there is a monic polynomial $F(X) \in A[X]$ such that $F(t)=0$.  Lift $F$ (via its coefficients) to a monic polynomial $f(X) \in R[X]$.  Then $f(s) =j$ for some $j\in J$.  Set $g(X) := f(X) - j$; this is a monic polynomial over $R$ that vanishes when evaluated at $s$.

Now let $(T,\n)$ be a local overring of $R$ that satisfies going-down over $R$
.  We will show that $T$ is a localization of $R$. \vspace{3pt}

\noindent {\bf Case 1:}  Suppose $JT = T$.  Then we claim $S\subseteq T$.  To see this,  
note that since $J \nsubseteq \n$, we have $\n \cap R = \p \nsupseteq J$.  Then there is some $P \in \Spec S$ with $P \cap R = \p$ (since $S$ is integral over $R$), and $P \nsupseteq J$ either, whence  we have $R_\p = S_P$ by Lemma~\ref{lem:basic} (\ref{it:locoutside}).  Hence, $S \subseteq S_P = R_\p \subseteq 
T$.

Also since $JT=T$, we have the image of the map $\Spec T\to \Spec S$ is contained in $\Spec S\setminus V_S(J)$.  By Lemma~\ref{lem:basic} (\ref{it:poiso}), the contraction map $\Spec S\to \Spec R$ is injective on the set $\Spec S\setminus V(J)$.   Hence, by Lemma~\ref{lem:goingdown}, $S\subseteq T$ satisfies going-down.  Therefore, since $S$ is 
perinormal, $T
=S_P$ for some $P \in \Spec S$.  Then $JS_P = S_P$ by assumption, so that $J
\nsubseteq P$.  But then since $J$ is a shared ideal of $S$ and $R$, Lemma~\ref{lem:basic} (\ref{it:locoutside}) applies, showing that 
$S_P = R_{P\cap R}$, so that $T$ is a localization of $R$. \vspace{3pt}

\noindent {\bf Case 2:}  On the other hand if $JT \neq T$, then by \cite[Proposition 3.9]{nmeSh-peri}, the map $\Spec T \rightarrow \Spec R_{\n\cap R}$ induces a bijection $\Spec^1 T \to \Spec^1(R_{\n \cap R})$, and by \cite[Lemma 3.7]{nmeSh-peri} the corresponding localizations are equal.  Since we have a similar bijection between  $\Spec^1(S_W)$ and $\Spec^1(R_{\n\cap R})$ (where $W = R\setminus \n$) by Lemma~\ref{lem:relperi} (1), we get
 \[R_{\n\cap R} \subseteq T \subseteq \bigcap_{P \in \Spec^1T} T_P = \bigcap_{Q \in \Spec^1(S_W)} S_Q = S_W
\]
where the last equality follows from $S$ being a generalized Krull domain.

Now let $\p$ be the homomorphic image of $\n \cap R$ in $A$.  By Lemma~\ref{lem:basic} (\ref{it:loc}), $R_{\n \cap R}$ is the pullback of the diagram $A_\p \ra (S/J)_\p \leftarrow S_W$, and by hypothesis, the extension $A_\p \subseteq (S/J)_\p$ is integral and apparently fragile.  
But then since $S_W$ is the integral closure of $R_{\n\cap R}$, it follows from Lemma~\ref{lem:relperi} (2) that $T=R_{\n\cap R}$.
\end{proof}

At this point, the reader may wonder under what circumstances fragile extensions exist.  The reader may also ask for conditions under which fragility, global fragility, and apparent fragility coincide.  The next few results and examples are dedicated to these issues.  First, note that in exploring any kind of fragility, one need only ask what happens with reduced rings.

\begin{lemma}[Reducing fragilities to the reduced case]\label{lem:redred}
Let $A \subseteq B$ be an integra  l extension.  The extension is fragile (resp. apparently fragile, resp. globally fragile) if and only if $A$ and $B$ have the same nilradical $J$ and the extension $A/J \hookrightarrow B/J$ is fragile (resp. apparently fragile, resp. globally fragile).
\end{lemma}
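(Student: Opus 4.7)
The plan is to reduce all three cases to the apparently fragile (a.f.) case, since the other two are defined by a.f.\ holding on various localizations. The backbone is a single a.f.\ equivalence: if $A \subseteq B$ is a.f., then $\mathrm{nil}(A) = \mathrm{nil}(B)$; and once $J = \mathrm{nil}(A) = \mathrm{nil}(B)$, a.f.\ of $A \subseteq B$ matches a.f.\ of $A/J \subseteq B/J$ on the nose.

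The main work is showing that a.f.\ forces the nilradicals to agree. Suppose for contradiction that some $b \in \mathrm{nil}(B) \setminus A$ exists with $b^n = 0$, and set $C := A[b]$. Then $A \subsetneq C \subseteq B$, and the identities $C = A + Ab + \cdots + Ab^{n-1}$ and $bC = Ab + \cdots + Ab^{n-1}$ yield $C = A + bC$, so $C/bC$ is a homomorphic image of $A$. Since $b$ lies in every prime of $C$, the map $\Spec C \to \Spec C/bC$ is a bijection, so composing with the inclusion $\Spec C/bC \hookrightarrow \Spec A$ shows $\Spec C \to \Spec A$ is injective. This forbids two distinct primes of $C$ from contracting to the same prime of $A$, contradicting a.f.\ for $A \subsetneq C \subseteq B$. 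Once a common nilradical $J$ is in hand, the assignment $C \mapsto C/J$ is a bijection between intermediate rings of $A \subseteq B$ and those of $A/J \subseteq B/J$ (every such $C$ contains $J$ and has $\mathrm{nil}(C) = J$), and it preserves strict containment, contraction to $A$, and minimal primes; this translates a.f.\ verbatim.

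For the fragile and globally fragile cases, I would apply the a.f.\ equivalence at each localized extension $A_P \subseteq B_{A \setminus P}$ or $A_W \subseteq B_W$. The key uniform input is that when $\mathrm{nil}(A) = \mathrm{nil}(B) = J$, the reduced rings $A/J$ and $B/J$ have reduced localizations, so $\mathrm{nil}(A_P) = JA_P = \mathrm{nil}(B_{A \setminus P})$, and similarly for $A_W, B_W$; the a.f.\ equivalence then transfers a.f.\ between each localized extension and its reduced quotient, which is precisely the defining data for fragility (resp.\ global fragility) of $A/J \subseteq B/J$. This settles the $(\Leftarrow)$ directions, as well as the $(\Rightarrow)$ direction for global fragility since taking $W = \{1\}$ already forces $\mathrm{nil}(A) = \mathrm{nil}(B)$. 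For the $(\Rightarrow)$ direction of fragility, a priori one only gets agreement of nilradicals locally at each $P$; but any $b \in \mathrm{nil}(B)$ then satisfies $b/1 \in \mathrm{nil}(A_P) \subseteq A_P$ for every $P \in \Spec A$, so the class of $b$ in $B/A$ vanishes locally everywhere, forcing $b \in A$ and thus $b \in \mathrm{nil}(A)$. The main obstacle throughout is the injection $\Spec A[b] \hookrightarrow \Spec A$; everything else is dictionary work.
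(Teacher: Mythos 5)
Your proof is correct and follows essentially the same route as the paper's: anchor everything to the apparently fragile case, show that apparent fragility forces the nilradicals to coincide via an injectivity argument on $\Spec$, then transport the a.f.\ equivalence through the bijection $C \mapsto C/J$ of intermediate rings, and finally propagate to the fragile and globally fragile cases by localizing and (for the forward direction of fragility) invoking the local-to-global principle to push $\mathrm{nil}(B) \subseteq A_P$ for all $P$ down to $\mathrm{nil}(B) \subseteq A$. The only cosmetic difference is in the nilradical step: the paper notes that $\Spec(A+\mathrm{nil}(B)) \to \Spec A$ is a poset isomorphism and concludes $A + \mathrm{nil}(B) = A$ by a.f., whereas you take a single nilpotent $b \in B \setminus A$ and show $\Spec A[b] \to \Spec A$ is injective by factoring through $\Spec(A[b]/bA[b])$ — the two arguments are interchangeable.
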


\begin{proof}
First suppose $A \subseteq B$ is apparently fragile, and let $J$ be the nilradical of $B$.  Note that the nilradical of $A$ is $J \cap A$.  From this, it is easy to see that the map $\Spec(A+J) \ra \Spec A$ is a poset isomorphism, so by apparent fragility, we have $J \subseteq A$, so that $J \cap A =J$.  Now let $C$ be a ring with $A/J \subsetneq C \subseteq B/J$.  Then $C=D/J$ for some ring $D$ with $A \subsetneq D \subseteq B$, so there is some $\p \in \Min A$ and unequal $P, Q \in \Spec D$ with $P \cap A = Q \cap A = \p$.  Quotienting out by $J$ then shows that $C$ has the same relationship with $A/J$, completing the proof that $A/J$ is apparently fragile in $B/J$.

Now assume $A \subseteq B$ is globally fragile (resp. fragile).  Note that every prime ideal of $A$ contains $J \cap A$, and every multiplicative set $W$ in $A$ for which $A_W \neq 0$ satisfies $W \cap A = \emptyset$.  Hence, for any $W$ (resp. for any $W=A\setminus P$, $P \in \Spec A$), the fact that $A_W \subseteq B_W$ is apparently fragile implies (by the above) that $JB_W \subseteq A_W$ for any such $W$.  Hence $J \cap B \subseteq A$ is the nilradical of $A$.  Moreover, by the above, we have that $A_W/J_W = (A/J)_W \hookrightarrow (B/J)_W = B_W/J_W$ is apparently fragile for any such $W$.  Hence $A/J \hookrightarrow B/J$ is globally fragile (resp. fragile).

Conversely, suppose that $J\cap B \subseteq A$ and that $A/J \hookrightarrow B/J$ is apparently fragile. Let $C$ be a ring with $A \subsetneq C \subseteq B$.  Then $J$ is an ideal of $C$ and $A/J \subsetneq C/J \subseteq B/J$.  Some minimal prime of $A/J$ splits in $C/J$; hence the corresponding minimal prime of $A$ splits in $C$, so that $A \subseteq B$ is apparently fragile.

Finally, suppose that $J \cap B \subseteq A$ and that $A/J \hookrightarrow B/J$ is globally fragile (resp. fragile).  Then for any multiplicative set $W$ in $A$ with $W \cap J = \emptyset$ (resp. $W=A\setminus P$ for some $P \in \Spec A$), $J_W$ is the nilradical of both $A_W$ and $B_W$, and $(A/J)_W = A_W/J_W \hookrightarrow B_W / J_W= (B/J)_W$ is apparently fragile.  Then by the above, $A_W \subseteq B_W$ is apparently fragile for any such $W$, so that $A \subseteq B$ is globally fragile (resp. fragile).
\end{proof}

Next, we observe that the three notions of fragility coincide in many dimension 0 cases.

\begin{prop}[Fragilities coincide in dimension 0]\label{pr:dim0}
Let $A$ be a dimension 0 ring with finitely many minimal primes, and let $B$ be an integral extension of $A$.  Then $A\subseteq B$ is fragile iff it is apparently fragile iff it is globally fragile.
\end{prop}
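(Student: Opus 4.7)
The plan is to reduce to a product-of-fields setting via Lemma~\ref{lem:redred} and then observe that all three notions of fragility collapse to a single common condition on the factors.

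First, applying Lemma~\ref{lem:redred} to the common nilradical $J$, each fragility notion for $A \subseteq B$ is equivalent to the corresponding one for $A/J \hookrightarrow B/J$, so I may assume both rings are reduced. Since $A$ is then a reduced zero-dimensional ring with finitely many minimal (equivalently maximal) primes $\p_1, \ldots, \p_n$, the Chinese remainder theorem gives $A \cong \prod_{i=1}^n k_i$ with $k_i := A/\p_i$ a field. The orthogonal idempotents $e_i \in A$ then split $B$ as $B = \prod_{i=1}^n B_i$ with each $B_i := e_i B$ reduced and integral over $k_i$.

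Second, I claim that each of apparent fragility, fragility, and global fragility for $A \subseteq B$ is equivalent to the single condition that, for every $i$, the inclusion $k_i \subseteq B_i$ is apparently fragile---which, since $k_i$ is a field, simply requires every strict intermediate ring $k_i \subsetneq C_i \subseteq B_i$ to have at least two prime ideals. The key structural point is that any $C$ with $A \subseteq C \subseteq B$ decomposes under the $e_i$ as $C = \prod_{i=1}^n C_i$ with $k_i \subseteq C_i \subseteq B_i$, and the primes of $C$ lying over $\p_j$ are in bijection with $\Spec C_j$.

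From these observations, the three equivalences are largely bookkeeping. For apparent fragility, any $C \supsetneq A$ has some $C_{i_0} \supsetneq k_{i_0}$, whose prime splitting (given by the common condition) lifts to a splitting of $\p_{i_0}$ in $\Spec C$; conversely, to deduce the common condition from apparent fragility of $A \subseteq B$, I test against the intermediate rings $C := k_1 \times \cdots \times C_i \times \cdots \times k_n$ in which only the $i$-th factor is enlarged, forcing any split to occur at $\p_i$. For global fragility, a direct computation shows that $A_W \cong \prod_{i \in J(W)} k_i$ and $B_W \cong \prod_{i \in J(W)} B_i$ for any multiplicative set $W \subseteq A$, where $J(W) := \{i : w_i \neq 0 \text{ for all } w \in W\}$, and every subset of $\{1,\ldots,n\}$ is realizable as some $J(W)$; the same product argument then reduces global fragility to the common condition. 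Fragility is the special case $W = A \setminus \p_i$, giving $J(W) = \{i\}$. The only real obstacle is careful bookkeeping on primes of finite products; nothing deeper is needed.
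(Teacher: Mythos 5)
Your proposal is correct and takes essentially the same approach as the paper: both reduce to the reduced case via Lemma~\ref{lem:redred}, decompose $A$ into a finite product of fields (and $B$ correspondingly via the idempotents / Chinese Remainder Theorem), and then chase primes through the product decomposition to collapse all three fragility conditions to the single condition that each factor inclusion $k_i \subseteq B_i$ is apparently fragile. The organization differs slightly --- the paper proves apparent fragility implies global fragility directly by lifting an intermediate ring $D \subseteq B_W$ to an intermediate ring $C \subseteq B$, while you identify the common per-factor condition up front and verify each notion against it --- but the underlying decomposition and bookkeeping are the same.
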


\begin{proof}
First, we may assume by Lemma~\ref{lem:redred} that $A$ and $B$ are reduced, so that $A$ is a product of finitely many fields.  Next, note that is enough to show that apparent fragility implies global fragility.  So suppose the extension is apparently fragile.  Setting some notation, we have $A = K_1 \times \cdots \times K_n$, where the $K_j$ are fields, and the prime ideals of $A$ are $\p_j = K_1 \times \cdots \times K_{j-1} \times 0 \times K_{j+1} \times \cdots \times K_n$, for $1\leq j \leq n$.  By the Chinese Remainder Theorem, we have $B \cong \prod_{i=1}^n B/\p_i B$.

Now let $W$ be a saturated multiplicative subset of $A$.  Without loss of generality, there is some $1\leq t \leq n$ such that $W = A \setminus (\p_1 \cup \cdots \cup \p_t)$.  We have $A_W = A/(\p_1 \cap \cdots \cap \p_t) = K_1 \times \cdots \times K_t$ and $B_W \cong \prod_{i=1}^t B/\p_i B$ under the  earlier identification of $B$ as a product of $K_i$-vector spaces.  Let $D$ be a ring with $A_W \subsetneq D \subseteq B_W$.  Then $D=C_W$ for some ring $C$ with $A \subsetneq C \subseteq B$.  Moreover, any element of $C_W$ has a numerator in $C$ that can be represented in the form $(b_1, \cdots, b_t, a_{t+1}, \cdots, a_n)$, with the $b_i \in B/\p_i B$ and the $a_i \in K_i$.  In particular, for any $i>t$, there is only one prime ideal of $C$ that contracts to $\p_i$.  So by apparent fragility, there is some $i\leq t$ and some $P, Q \in \Spec C$ with $P \neq Q$ and $P \cap A = Q \cap A = \p_i$.  But then $\p_i A_W$ is a (minimal) prime of $A_W$, and we have $PC_W, QC_W \in \Spec C_W$ with $PC_W \neq QC_W$ and $PC_W \cap A_W = QC_W \cap A_W = \p_i A_W$.  Since $W$ was arbitrary, $B$ is a globally fragile extension of $A$.
\end{proof}

Next, we show that the three notions of fragility coincide when the base ring is an integral domain, and more generally.

\begin{prop}\label{pr:contract1}
Let $A \subseteq B$ be an apparently fragile, integral extension.  Assume that there is some $\p \in \Min A$ such that \begin{enumerate}
\item only finitely many primes in $B$ lie over $\p$, and
\item for any $\q \in \Min A \setminus \{\p\}$, there is only one prime ideal of $B$ that contracts to $\q$.
\end{enumerate}
Then $\p B=\p$, and the extension $A\subseteq B$ is globally fragile.
\end{prop}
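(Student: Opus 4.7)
The plan is to establish $\p B = \p$ first and then leverage this identity as the key input for global fragility.

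To show $\p B = \p$, the strategy is to analyze the intermediate subring $C := A + \p B$ of $B$, which is well-defined because $\p B$ is a $B$-ideal. First I would observe that $A \cap \p B = \p$: lying-over (for the integral extension $A \subseteq B$) produces some $P^* \in \Spec B$ with $P^* \cap A = \p$, and then $\p B \subseteq P^*$ forces $A \cap \p B \subseteq \p$, with the reverse inclusion trivial. It follows that $C / \p B \cong A / \p$ is a domain, so $\p B$ is a prime of $C$ lying over $\p$. The key technical step is then to show $\p B$ is the \emph{only} such prime of $C$: for $P \in \Spec C$ with $P \cap A = \p$, lift $P$ to $P^* \in \Spec B$ by integrality (so that $P^* \supseteq \p B$, hence $P \supseteq \p B$), and conversely show $P \subseteq \p B$ by writing each $c \in P$ as $a + x$ with $a \in A$ and $x \in \p B$ and noting that $a = c - x \in P \cap A = \p$. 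With uniqueness secured, apparent fragility applied to $C$ finishes this step: if $C \supsetneq A$, some $\q \in \Min A$ must split in $C$, but $\q = \p$ contradicts uniqueness, and $\q \ne \p$ contradicts hypothesis~(2), since two primes of $C$ over $\q$ would lift via integrality to primes of $B$ over $\q$, forcing them to coincide. Thus $C = A$, i.e., $\p B \subseteq A$, and combined with $A \cap \p B = \p$ this gives $\p B = \p$.

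For global fragility, I would fix a multiplicative subset $W \subseteq A$ and divide into cases according to whether $W$ meets $\p$. If $s \in W \cap \p$, then $sB \subseteq \p B = \p \subseteq A$, so every $b \in B$ equals $s^{-1}(sb) \in A_W$ inside $B_W$; this forces $A_W = B_W$, making apparent fragility vacuous. If $W \cap \p = \emptyset$, start with an arbitrary $A_W \subsetneq D \subseteq B_W$ and form $C := \{b \in B : b/1 \in D\}$, a subring of $B$ strictly containing $A$ with $C_W = D$. Apparent fragility of $A \subseteq B$ then provides a splitting in $C$ over some $\q \in \Min A$; the same dichotomy invoking hypothesis~(2) as above forces $\q = \p$. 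Because $\p \cap W = \emptyset$, the two distinct primes $P_1, P_2 \in \Spec C$ over $\p$ satisfy $P_i \cap W = \emptyset$, and hence $P_1 C_W \ne P_2 C_W$ exhibits a splitting of $\p A_W \in \Min A_W$ inside $D$.

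The principal obstacle I foresee is isolating the uniqueness of $\p B$ as a prime of $C$ over $\p$, which hinges on the specific additive decomposition $C = A + \p B$ together with the lying-over property. Once this lemma is in hand, both halves of the proposition follow by judicious choice of the auxiliary ring $C$ (namely $A + \p B$ for the first half, and the pullback of $D$ for the second) and repeated use of hypothesis~(2) to rule out splittings away from $\p$.
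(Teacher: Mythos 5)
Your proof is correct, and it takes a genuinely different and cleaner route than the paper's for the first half. The paper sets $J := \bigcap_{i=1}^n P_i$ (the finite intersection over the fiber above $\p$, i.e. the radical of $\p B$), considers $C := A + J$, and invokes the pullback Lemma~\ref{lem:basic}(\ref{it:poiso}) to match up minimal primes of $C$ with those of $A$; apparent fragility then gives $C = A$, whence $J \subseteq A$ and $J = \p$. You instead work with $C := A + \p B$ directly, and the additive decomposition $c = a + x$ together with a single lying-over step shows elementarily that $\p B$ is the \emph{unique} prime of $C$ over $\p$ --- no pullback machinery needed. Notably, your argument never uses hypothesis~(1): the paper needs finiteness so that a prime containing $J = \bigcap P_i$ must contain some $P_i$, a step your route avoids entirely, so this is a small strengthening. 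The global-fragility half is essentially the paper's argument (the $W \cap \p \neq \emptyset$ case using $\p \subseteq (A:B)$, and otherwise pulling $D$ back to $C \subseteq B$ and localizing the splitting over $\p$); the only thing worth spelling out is the routine verification that $C := \{b \in B : b/1 \in D\}$ is a ring with $C_W = D$ and $A \subsetneq C \subseteq B$.
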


\begin{proof}
Let $P_1, \ldots, P_n$ be the primes of $B$ (necessarily minimal) that contract to $\p$.  Set $J := \bigcap_{i=1}^n P_j$.  Consider the ring $C := A + J$.  It fits into the following pullback diagram: \[\xymatrix{
C\ \ar@{^(->}[r] \ar@{->>}[d] &B\ar@{->>}[d]\\
A/\p\ \ar@{^(->}[r] &B/J
}\]
By Lemma~\ref{lem:basic}(\ref{it:poiso}), the contraction map $\Spec B \ra \Spec C$ induces a poset isomorphism between $\Spec B \setminus V_B(J)$ and $\Spec C \setminus V_C(J)$.  However, 
$\Spec B \setminus V_B(J)$ consists of those primes of $B$ that do not contain any of $P_1, \ldots, P_n$, so that by hypothesis, the \emph{minimal} elements of $\Spec B \setminus V_B(J)$ are in bijection with the minimal elements of $\Spec A \setminus \{\p\}$ via the contraction map $\Spec B \ra \Spec A$.  Moreover, $V_C(J) \cong V_A(\p)$ as posets.  Put together, this means that the contraction map $\Spec C \ra \Spec A$ induces a bijection between the minimal primes of $C$ and those of $A$, so that by apparent fragility of the extension $A\subseteq B$, we have $C=A$.  Thus, $J\subseteq A$, but $\p \subseteq J \cap A \subseteq P_1 \cap A = \p$, so $J=\p$, which since it is also an ideal of $B$ satisfies $\p B = \p$.

Now let $W$ be a multiplicative subset of $A$.  If $W \cap \p \neq \emptyset$, then $A_W = B_W$, since $\p \subseteq (A:B)$.  Assuming then that $W\cap \p = \emptyset$, let $R$ be a ring with $A_W \subsetneq R \subseteq B_W$.  Then $R=C_W$ for some $A \subsetneq C \subseteq B$.  Then by apparent fragility of $A\subseteq B$, without loss of generality we have $Q_1 \neq Q_2$, where $Q_i := P_i \cap C$ for $i=1, \ldots, n$.  Since $W\cap \p =\emptyset$, we have $W\cap Q_j =\emptyset$ as well, so that each $(Q_j)_W \in \Spec R$ and $(Q_1)_W \neq (Q_2)_W$.  Moreover, $(Q_1)_W \cap A_W = (Q_2)_W \cap A_W = \p_W$.  Thus, the extension $A\subseteq B$ is globally fragile.
\end{proof}

\begin{example}\label{ex:CS} Theorem~\ref{thm:fragilepullback} is sharp, in that the height condition can not be omitted.  To see this, consider the classic example given by Cohen and Seidenberg \cite[Section 3(A)]{CohSei}, which they used to show that the integral closure of a domain may not satisfy going down over it.

In particular, the authors there consider the ring $R=k[X,Y,Z]/(Y^2 - X^2 - X^3)$ over a field $k$ of characteristic $0$ (familiar in geometry as the coordinate ring of a cylinder over a nodal curve), along with its integral closure $S = k[t,Z]$.  The map from $R$ to $S$ is given by $x \mapsto t^2-1$, $y \mapsto t^3-t$.  (Here we are using the convention that lower case variables represent the residue classes of upper case variables.)

Put in the context of Theorem~\ref{thm:fragilepullback}, let $J := (t^2 - 1)S = (x,y)R$. Note that as an ideal of $S$, $J$ has two minimal primes, namely $(t-1)S$ and $(t+1)S$.  Then the map $R \hookrightarrow S$ fits into the pullback diagram \[\xymatrix{
R \ar[r] \ar@{->>}[d] &S\ar@{->>}[d]\\
A=k[Z] \ar[r]^g & B=S/J
}\]
where $g$ is the obvious inclusion.  That is, $R = k[Z] + J$.  (In the notation of \cite{DFF-amalgam},  $R = A \bowtie^f J$, where $f:A \ra S$ is the inclusion map $k[Z] \hookrightarrow k[t,Z]$.) Moreover, we claim that $g$ is a fragile inclusion.  To see this, first note that by Proposition~\ref{pr:contract1}, we only need to show it is \emph{apparently} fragile.  Next, observe that as an $A$-module, $B$ is free of rank two on the elements $1$ and $t$; in particular, $B$ is integral over $A$.  Accordingly, choose a ring $C$ with $A \subsetneq C \subseteq B$.  Then $C$ contains an element of the form $e=t\cdot h$, where $h\in A$.  Then $(e+h)(e-h) = e^2 - h^2 = (t^2-1)h^2 = 0$, but $e+h\neq 0$ and $e-h \neq 0$ are elements of $C$.  Hence, $C$ is not an integral domain.  But it is reduced because $B$ is.  Thus, $C$ has at least two minimal primes, even though $A$ contains only one.  Moreover, both must contract to $0$, since both of the minimal primes of $B$ contract to $0$.  This completes the proof that $A$ is apparently (hence globally) fragile in $B$.  Thus, all conditions of Theorem~\ref{thm:fragilepullback} are satisfied except for the height condition.

However, $R$ is not perinormal, as it is not \Rone\ (see \cite[Proposition 3.2]{nmeSh-peri}).  Indeed, $J$ is a height one prime of $R$, and $R_J \cong k(Z)[X,Y]_{(X,Y)} / (Y^2 - X^2 - X^3)$, which is not integrally closed because $(y/x)^2 = 1+x\in R_J$, even though $y/x \notin R_J$.
%
\end{example}

Next we exhibit two broad classes of rings where such fragile subrings exist.

\begin{thm}[How to squeeze minimal primes]\label{thm:minprimes}
Suppose that the commutative ring $B$ satisfies one of the following two conditions: \begin{enumerate}[label=\emph{(\alph*)}]
 \item $B$ is finitely generated over a field $k$, or
 \item $B$ is a complete Noetherian local ring such that either  \begin{itemize}
   \item $B$ is equicharacteristic (i.e. contains a field), or
   \item the residual characteristic of $B$ generates a height 1 ideal.
 \end{itemize}
\end{enumerate}
Let $P_1, \ldots, P_n$ be the minimal primes of $B$, and choose an integer $1\leq t < n$.  Then there is a subring $A$ of $B$ such that $B$ is integral over $A$, the extension $A \subseteq B$ is globally fragile, all the $P_j$ for $j< t$ contract to \emph{distinct} minimal primes of $A$, and $P_i \cap A = P_j \cap A$ for all $t\leq i <j \leq n$.  Moreover, $A$ satisfies the same condition among \emph{(a)}, \emph{(b)} that $B$ satisfies.
\end{thm}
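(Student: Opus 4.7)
The plan is to construct $A$ as the preimage in $B$ of a suitable subring $A_0$ of $B/J$, where $J := \bigcap_{i=t}^n P_i$. By Lemma~\ref{lem:redred} I may reduce to the case $B$ is reduced, so $C := B/J$ is reduced with minimal primes $\bar P_t, \dots, \bar P_n$. The crux will be to build $A_0 \subseteq C$ such that $C$ is module-finite over $A_0$, $A_0$ is a domain with $A_0 \cap \bar P_i = (0)$ for each $i \geq t$, the extension $A_0 \subseteq C$ is globally fragile, and $A_0$ itself satisfies the same condition (a) or (b) as $B$. Given such $A_0$, I set $A := \{b \in B : b + J \in A_0\}$; then $J \subseteq A$ and $A/J \cong A_0$, and lifting a monic relation for $\bar b \in C$ over $A_0$ and absorbing the remainder into $J \subseteq A$ shows that $B$ is integral (in fact module-finite) over $A$. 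Artin--Tate in case (a)---or the analogous observation that $A$ inherits the complete local structure of $A_0$ in case (b)---then yields that $A$ is in the same class. Parts (\ref{it:poiso}) and (\ref{it:locoutside}) of Lemma~\ref{lem:basic} identify the primes of $B$ not containing $J$ with those of $A$ not containing $J$, so $P_1, \dots, P_{t-1}$ descend to $t-1$ distinct minimal primes of $A$; for $i \geq t$, since $\bar P_i \cap A_0 = 0$ we have $P_i \cap A = J$, a single minimal prime of $A$. Global fragility of $A \subseteq B$ reduces at every localization to global fragility of $A_0 \subseteq C$ via Lemma~\ref{lem:redred}.

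The construction of $A_0$ invokes the appropriate normalization theorem in each case. In case (a), Noether normalization supplies a polynomial subring $k[y_1, \dots, y_d] \hookrightarrow C$ (with $d = \dim C$) over which $C$ is module-finite; in case (b), the Cohen structure theorem applied to the complete Noetherian local ring $C$ supplies either $k[[y_1, \dots, y_d]] \hookrightarrow C$ (equicharacteristic case) or $V[[y_1, \dots, y_d]] \hookrightarrow C$ with $V$ a complete DVR (mixed-characteristic case), where the hypothesis in (b) on the residual characteristic generating a height-one ideal is exactly what guarantees such a $V$ exists. A generic linear change of parameters, combined with prime avoidance, arranges that every nonzero element of the chosen $A_0$ avoids every $\bar P_i$, so $A_0$ is a domain injecting into each component $C/\bar P_i$.

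For global fragility of $A_0 \subseteq C$, Proposition~\ref{pr:contract1} applies: $A_0$ being a domain has unique minimal prime $(0)$; only the finitely many minimal primes $\bar P_t, \dots, \bar P_n$ of $C$ lie over $(0)$; and the second hypothesis of that proposition is vacuous. It therefore suffices to verify apparent fragility, which I will do following the computation in Example~\ref{ex:CS}: given $D$ with $A_0 \subsetneq D \subseteq C$ and $d \in D \setminus A_0$, the monic relation satisfied by $d$ over $A_0$ together with the embedding $C \hookrightarrow \prod_{i \geq t} C/\bar P_i$ produces two nonzero elements of $D$ whose product vanishes in $C$, forcing $D$ to have two distinct minimal primes above $(0)$.

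The main technical obstacle is the generic-choice step in constructing $A_0$, which must simultaneously achieve module-finiteness of $C$ over $A_0$ and injectivity of $A_0$ into every component $C/\bar P_i$. The strength of hypotheses (a) and (b)---supplying polynomial or power-series normalizations with sufficient parametric flexibility---is precisely what makes this feasible.
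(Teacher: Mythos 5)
Your overall plan (build a domain $A_0$ inside $B/J$, then set $A$ equal to the pullback) is in the spirit of the paper's, but two of your steps have real gaps, and the paper's argument is structured differently at the decisive point. The most serious problem is the ``generic linear change of parameters'' step: you want a Noether normalization (or Cohen subring) $A_0$ of $C = B/J$ with $A_0 \cap \bar P_i = (0)$ for \emph{every} minimal prime $\bar P_i$. But if $A_0 \subseteq C$ is a module-finite extension with $A_0$ a domain and $A_0 \cap \bar P_i = (0)$, then $A_0 \hookrightarrow C/\bar P_i$ is an integral extension of domains, so $\dim C/\bar P_i = \dim A_0$ for each $i$. Thus your construction silently requires $C$ to be equidimensional, and no amount of genericity or prime avoidance can produce $A_0$ otherwise; e.g.\ for $B = k[x]\times k$, $t=1$, there is no domain over which both $k[x]$ and $k$ are integral. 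A smaller but still genuine error is the appeal to Lemma~\ref{lem:redred} to propagate global fragility from $A_0 \subseteq C$ to $A \subseteq B$: that lemma is specifically about quotienting by the common nilradical, and $J = \bigcap_{i \ge t} P_i$ is not the nilradical of $B$ once $t > 1$, so the lemma does not apply; in particular localizations $A_\p \subseteq B_{A\setminus\p}$ at primes $\p$ containing both $J$ and some $P_j$ with $j < t$ are not handled.

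The paper takes a different route precisely at the point where you must prove apparent fragility directly. After choosing a normalization $C$ of $B/J$ and forming the pullback $D$ of $C \hookrightarrow B/J \twoheadleftarrow B$, the paper never claims that $C \subseteq B/J$ or $D \subseteq B$ is itself fragile. Instead it notes that $B$ is module-finite over $D$, so by Eakin--Nagata $D$ is Noetherian and there is ACC on intermediate rings between $D$ and $B$; it then defines $A$ as an intermediate ring \emph{maximal} with respect to having its minimal primes in bijective correspondence with those of $D$, and reads apparent fragility of $A \subseteq B$ off from that maximality, promoting it to global fragility with Proposition~\ref{pr:contract1}. That ACC-maximality step is exactly the idea missing from your proposal: you try to verify apparent fragility of the normalization itself by imitating the computation in Example~\ref{ex:CS}, but that computation is specific to a free rank-two extension and you give no argument that it generalizes, whereas the maximality device sidesteps the question entirely.
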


\begin{proof}
Let $J := \bigcap_{i=t}^n P_i$.
Then there is some subring $C$ of $B/J$ where $C$ is a (regular) domain and  $C \subseteq B/J$ is a module-finite extension.  (In case (a), choose a Noether normalization of $B/J$ with respect to $k$, cf. \cite[Lemma 33.2]{Mats}, which will then be a polynomial ring over $k$.  In case (b), one may use \cite[Theorem 16]{Coh-complete}, in which case $C$ will be a complete regular local ring.)  Now let $D$ be the pullback of the diagram $C \hookrightarrow B/J \twoheadleftarrow B$.  Then $B$ is module-finite over $D$ (lift the generators of $B/J$ over $C$ to $B$, and add the generator $1$ if necessary).  We claim that the extension $D \subseteq B$ satisfies the properties desired for $A \subseteq B$ with regard to behavior on minimal primes.

To see the claim, recall the order preserving  bijection between $(\Spec B)\setminus V_B(J)$ and $(\Spec D)\setminus V_D(J)$ given   by the contraction map (Lemma \ref{lem:basic} (\ref{it:poiso})).  Thus for $i=1,\ldots t-1$, $\p_i:=P_i\cap D$ are distinct minimal prime ideals of $A$.  Clearly they are also the only minimal primes of $D$, with the possible exception of $J$.   We next show that $J$ is a minimal prime of $D$.  Since $D/J \cong C$ we see that $J$ is a prime ideal of $D$.  Suppose that it is not minimal, then there exists $\q\subset J$, $\q$ a minimal prime ideal of $D$.    Since $\q\not\in V_D(J)$, we must have $\q = \p_i$ for some $1\leq i<t$.  Since $D\subset B$ satisfies going-up, there must be a prime ideal $Q$ of $B$ with $P_i \subset Q$, such that $Q\cap D = J$.   But then $\bigcap_{i=t}^n P_i = J\subseteq Q$, whence $P_k\subseteq Q$ for some $t\leq k\leq n$.  Which gives the contradiction that $P_i\subseteq P_k$.  Hence $J$ is a minimal prime of $D$ distinct from all the  $\p_i$, for $i=1,2,\ldots, t-1$.

To construct $A$, we note that as the Noetherian ring $B$ is module finite over $D$, $D$ is also a Noetherian ring by the Eakin-Nagata theorem \cite[Theorem 3.7(i)]{Mats}.   Hence we have ACC on chains of rings between $D$ and $B$.   In particular we can pick one that is maximal with respect to 
having its minimal primes in bijective correspondence with those of $D$, under the $\Spec$ map.  Call it $A$.  Then the extension $A \subseteq B$ is apparently fragile, hence globally fragile by Proposition~\ref{pr:contract1}.

If $B$ contains a field, then $A$ contains a field too.  To see this, first note that the characteristics of $A$, $B$, and the field that $B$ contains must match.  If that characteristic is a positive prime number $p$, then both $A$ and $B$ contain $\mathbb F_p$.  If $\chr A = \chr B=0$, then $A$ and $B$ contain the rational integers, and every nonzero integer is invertible in $B$.  But by integrality, every element of $A$ that is invertible in $B$ must also be invertible in $A$.  Thus, $\mathbb Q\subseteq A$.

If $B$ is finitely generated over a field, then by the Artin-Tate lemma \cite{ArTa-lemma}, $A$ is also.   If $B$ has mixed characteristic $p$ and $p$ generates a height one ideal of $B$, then integrality forces $pA$ to be a height one ideal of $A$ as well.

Finally, we must show that if $B$ is complete, Noetherian and local, then $A$ is complete, Noetherian and local.  First, $A$ is Noetherian by the Eakin-Nagata theorem \cite[Theorem 3.7(i)]{Mats}.  Let $\n$ be the maximal ideal of $B$.  By Going-Up, $\m:=\n \cap A$ is maximal.  By the lying-over and incomparability properties of integral extensions, $(A,\m)$ is local.  Next, since $\m B$ is $\n$-primary, the $\n$-adic topology on $B$ is the same as the $\m$-adic one.  Hence, one has the diagram \[
A \hookrightarrow \hat A \hookrightarrow \hat{A} \otimes_A B \cong \hat B = B.
\]
In particular, since $B$ is module-finite over $A$, $\hat A$ must be module-finite over $A$ as well.  Since $A/\m = \hat{A} / \hat \m = \hat{A} / \m \hat{A}$, we have $\hat A = A + \m \hat{A}$, so that by Nakayama's lemma, $\hat A = A$.
\end{proof}

In our next result, by an application of Zorn's Lemma, we are able to show that even in non-Noetherian context, (apparently) fragile subrings can still exist.

\begin{prop}[Expanding subrings to the point of apparent fragility]\label{pr:fragile} Let  $C \subseteq B$ be an integral extension of commutative rings. Then  there exists an apparently fragile subring $A$ of $B$ containing $C$ such that in the map $\Spec A \ra \Spec C$, every minimal element of $\Spec C$ has a singleton fiber.
\end{prop}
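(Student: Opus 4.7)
The plan is to apply Zorn's lemma to the family
\[
\mathcal{F} := \{\, A : C \subseteq A \subseteq B \text{ and each minimal prime of } C \text{ has a singleton fiber in } \Spec A \,\}
\]
ordered by inclusion. Since $C \in \mathcal{F}$, the family is nonempty. For a chain $(A_i)_{i \in I}$ in $\mathcal{F}$ with union $A := \bigcup_i A_i$, if distinct primes $P \neq Q$ of $A$ contracted to the same minimal prime $\p$ of $C$, picking $x \in P \setminus Q$ and an index $i$ with $x \in A_i$ would produce distinct primes $P \cap A_i \neq Q \cap A_i$ of $A_i$ both lying over $\p$, contradicting $A_i \in \mathcal{F}$. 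Hence $A \in \mathcal{F}$, and Zorn's lemma supplies a maximal element, which I again call $A$.

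Second, I would verify that this maximal $A$ is apparently fragile in $B$. Suppose $A \subsetneq A' \subseteq B$; maximality forces $A' \notin \mathcal{F}$, so there exist a minimal prime $\p$ of $C$ and distinct primes $P', Q' \in \Spec A'$ both contracting to $\p$. Since $A \in \mathcal{F}$, there is a unique prime $\p_0$ of $A$ lying over $\p$, and both $P' \cap A$ and $Q' \cap A$ must equal $\p_0$. Finally, $\p_0$ is a minimal prime of $A$: any prime $\q_0 \subsetneq \p_0$ of $A$ would contract to a prime of $C$ inside the minimal prime $\p$, hence to $\p$ itself, and incomparability for the integral extension $C \subseteq A$ would then force $\q_0 = \p_0$. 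Thus $P'$ and $Q'$ are distinct primes of $A'$ above the minimal prime $\p_0$ of $A$, exactly as apparent fragility demands.

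Conceptually, the singleton-fiber condition built into $\mathcal{F}$ is precisely the obstruction preventing apparent-fragility splittings from already appearing in the subring, so maximality must transfer any such splittings from $A'$ down to a minimal prime of $A$. The two technical points to monitor are closure of $\mathcal{F}$ under directed unions (dispatched by the finite-witness observation above) and the promotion of $\p_0$ from ``a prime lying over $\p$'' to ``minimal in $A$'' via incomparability for integral extensions; neither looks like a serious obstacle, so I expect the argument to go through essentially as written.
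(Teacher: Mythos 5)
Your proof is correct and follows essentially the same strategy as the paper: apply Zorn's lemma to the family of intermediate subrings over which every minimal prime of $C$ still has a singleton fibre, with closure under chain-unions verified by the same finite-witness argument. In fact you are more complete than the paper, which stops after the chain-union step; you also carry out the (genuinely needed) verification that a maximal element $A$ is apparently fragile, and in particular you correctly upgrade the split above a minimal prime $\p$ of $C$ to a split above a \emph{minimal} prime $\p_0$ of $A$, using that any prime of $A$ contained in $\p_0$ contracts into $\p$, hence onto $\p$ by minimality, and then equals $\p_0$ by incomparability for the integral extension $C\subseteq A$.
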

\begin{proof} We use Zorn's Lemma to show the existence of $A$.   Let
\[
C\subseteq R_1\subseteq R_2\subseteq \ldots
\]
 be a chain of rings in $B$, chosen in such a way that the maps $\Spec R_j \ra \Spec C$, the elements of $\Min C$ have singleton fibers.   Set $T=\bigcup R_i$.  It will suffice to show that the elements of $\Min C$ have singleton fibers in the map $\Spec T \ra \Spec C$.  So let $\p \in \Min C$ and choose $P, Q \in \Spec T$ with $P \cap C = Q \cap C = \p$.  If $P \neq Q$, then there exists $a\in P \setminus Q$ and $b \in Q \setminus P$.  There exists $R_j$ such that $a, b \in R_j$.  Then setting $P' := P \cap R_j$ and $Q' := Q \cap R_j$, we have $P' \neq Q'$, yet $P' \cap C = P \cap C = \p = Q \cap C = Q' \cap C$.  Thus, $\p$ has a non-singleton fiber under the map $\Spec R_j \ra \Spec C$, contradicting our assumption.  Hence $P = Q$, so only one prime of $T$ contracts to $\p$.
\end{proof}

\begin{cor}
Let $C \subseteq B$ be an integral extension.  Assume either that (1) $C$ has only finitely many minimal primes and $\dim C=0$, or that (2) there is some minimal prime $\p$ of $C$ that satisfies the conditions of Proposition~\ref{pr:contract1} with respect to the extension $C \subseteq B$.  Then there is a \emph{globally} fragile subring $A$ of $B$ containing $C$ such that in the map $\Spec A \ra \Spec C$, every minimal element of $\Spec C$ has a singleton fiber. 
\end{cor}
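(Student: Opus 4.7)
The plan is to apply Proposition~\ref{pr:fragile} to the inclusion $C \subseteq B$ to obtain an apparently fragile intermediate ring $A$ whose spectrum has singleton fibers over $\Min C$, and then to promote apparent fragility to global fragility by invoking Proposition~\ref{pr:dim0} in case~(1) and Proposition~\ref{pr:contract1} in case~(2).

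So first, let $C \subseteq A \subseteq B$ be the subring supplied by Proposition~\ref{pr:fragile}. Since $C \subseteq A$ is integral, every minimal prime of $A$ contracts to a minimal prime of $C$, while conversely lying-over together with the singleton-fiber property shows that each $\p \in \Min C$ has a unique preimage $\p' \in \Spec A$; this $\p'$ must in fact be minimal, since a strictly smaller prime of $A$ would contract to a prime strictly contained in $\p$, contradicting minimality of $\p$ in $C$. Thus the contraction map $\Min A \to \Min C$ is a bijection.

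In case~(1), $\dim C = 0$ and integrality of $C \subseteq A$ force $\dim A = 0$, while the bijection above makes $\Min A$ finite. Proposition~\ref{pr:dim0} then tells us that apparent fragility and global fragility coincide for $A \subseteq B$, so $A$ is globally fragile. In case~(2), let $\p \in \Min C$ be the distinguished minimal prime furnished by the hypothesis and let $\p' \in \Min A$ be its unique preimage in $A$; I will verify the two hypotheses of Proposition~\ref{pr:contract1} for $A \subseteq B$ at $\p'$. Any prime of $B$ lying over $\p'$ contracts to $\p$ in $C$, so finiteness of primes over $\p'$ is inherited from the corresponding finiteness over $\p$. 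For the second condition, given $\q' \in \Min A \setminus \{\p'\}$, the bijection $\Min A \to \Min C$ yields $\q := \q' \cap C \in \Min C \setminus \{\p\}$, and by the hypothesis on $C \subseteq B$ only one prime of $B$ sits over $\q$; consequently at most, and by lying-over exactly, one prime of $B$ lies over $\q'$. Proposition~\ref{pr:contract1} then delivers global fragility of $A \subseteq B$. The main obstacle is the case~(2) verification, where one must use both the singleton-fiber property and the bijection $\Min A \to \Min C$ to control primes of $B$ over every $\q' \in \Min A$; once that bookkeeping is in place, the rest is routine.
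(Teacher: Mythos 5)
Your proposal is correct and follows precisely the route the paper intends: the paper's proof is simply ``Combine Proposition~\ref{pr:fragile} with Proposition~\ref{pr:dim0} or Proposition~\ref{pr:contract1},'' and your write-up spells out exactly how that combination works — applying Proposition~\ref{pr:fragile} to get an apparently fragile $A$ with singleton fibers over $\Min C$, noting $\Min A\to\Min C$ is a bijection, and then checking the hypotheses of Proposition~\ref{pr:dim0} or of Proposition~\ref{pr:contract1} for $A\subseteq B$ to upgrade to global fragility. The detailed verification in case~(2) (transferring the finiteness over $\p$ and the singleton-over condition from $\Min C\setminus\{\p\}$ to $\Min A\setminus\{\p'\}$) is exactly the bookkeeping the paper leaves implicit, and it is carried out correctly.
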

\begin{proof}
Combine Proposition~\ref{pr:fragile} with Proposition~\ref{pr:dim0} or Proposition~\ref{pr:contract1}.
\end{proof}

Continuing our theme that fragility is closely related to perinormality in pullbacks, we present a result that will later allow us to show that there is no minimal ``perinormalization'' of an integral domain, despite the fact that there is a canonical normalization, seminormalization, and weak normalization.   Additionally this next result is a partial converse to Theorem~\ref{thm:fragilepullback} (obtained by adding extra assumptions).

\begin{prop}[Necessity of fragility when pulling back from dim 0]\label{pr:subfield}
Let $S$ be a generalized Krull domain and $J$ an ideal of height at least two, such that $\dim S/J = 0$ and $J$ has only finitely many minimal primes over it. Let $A \subseteq S/J$ be an integral subring, and let $R$ be the pullback of $A \hookrightarrow S/J \twoheadleftarrow S$.  Then $R$ is perinormal if and only if $A$ is a fragile subring of $S/J$.
\end{prop}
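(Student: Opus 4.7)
The plan is to establish the forward direction via Theorem~\ref{thm:fragilepullback} (with the degenerate $n=1$ case handled separately), and to prove the converse by constructing, from a failure of fragility, an explicit local overring of $R$ that is going-down over $R$ but not a localization of $R$.

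For the forward direction, when $J$ has $n \ge 2$ minimal primes, Theorem~\ref{thm:fragilepullback} applies directly: since $\dim A = 0$ (as $A$ is integral over the zero-dimensional ring $S/J$) and $A$ has finitely many minimal primes (inherited from $S/J$), Proposition~\ref{pr:dim0} identifies fragility with apparent fragility, matching the theorem's hypothesis. When $n = 1$, the ring $S/J$ has a single prime, so Definition~\ref{def:fragile} tested against $C = S/J$ forces $A = S/J$ whenever $A$ is fragile, and hence $R = S$ is perinormal as a generalized Krull domain.

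For the converse I argue the contrapositive. If $A$ is not fragile in $S/J$, Proposition~\ref{pr:dim0} yields a ring $C$ with $A \subsetneq C \subseteq S/J$ such that exactly one prime of $C$ lies over each minimal prime of $A$. Faithfulness of localization applied to the nonzero $A$-module $C/A$ produces a minimal prime $\p$ of $A$ with $A_\p \subsetneq C_{\bar W}$, where $\bar W = A \setminus \p$. Form the pullback $T$ of $C \hookrightarrow S/J \twoheadleftarrow S$ (integral over $R$ by the opening argument of Theorem~\ref{thm:fragilepullback}), let $\mathfrak P$ be the preimage of $\p$ in $R$, and set $W = R \setminus \mathfrak P$. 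By Lemma~\ref{lem:basic}(\ref{it:loc}), $T_W$ is the pullback of $C_{\bar W} \to (S/J)_{\bar W} \leftarrow S_W$, where $C_{\bar W}$ is local (unique prime over $\p$).

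It remains to verify three claims about $T_W$: (i) $T_W$ is local, (ii) $R \subseteq T_W$ satisfies going-down, and (iii) $T_W$ is not a localization of $R$. For (i), primes of $T_W$ containing $J$ correspond via Lemma~\ref{lem:basic}(\ref{it:poiso}) to primes of $C_{\bar W}$, yielding a unique maximal candidate $\mathfrak m$; a prime of $T_W$ missing $J$ has $R_W$-contraction inside the maximal $\mathfrak P R_W$ of the local ring $R_W$, so going-up for the integral extension $R_W \subseteq T_W$ places it under some prime lying over $\mathfrak P R_W$, which must contain $J$ and hence equal $\mathfrak m$. For (ii), lift chains $\q_1 \subseteq \q_2$ in $R$ by case analysis on whether $\q_2 \supseteq J$, using Lemma~\ref{lem:basic}(\ref{it:poiso}) in both cases and locality of $T_W$ when $\q_2 = \mathfrak P$. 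For (iii), $\mathfrak m \cap R_W = \mathfrak P R_W$ shows $T_W$ dominates $R_W$, so any identification $T_W = R_V$ forces $V \subseteq R \setminus \mathfrak P$, collapsing $T_W = R_V \subseteq R_W$ and contradicting the strict inclusion $R_W \subsetneq T_W$. The main obstacle is (i): it requires simultaneously exploiting the fragility-violating choice of $C$ (to single out one $J$-containing maximal of $T_W$) and the integral structure of $R_W \subseteq T_W$ (to confine the remaining primes below it). The other two steps are routine bookkeeping together with the general principle that a proper local overring of a local ring cannot be a localization.
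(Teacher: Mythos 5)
Your proof is correct and follows essentially the same strategy as the paper's: in the converse direction, both take the non-fragility witness $C$ and form the pullback $T$ of $C \hookrightarrow S/J \twoheadleftarrow S$ (you take it globally and localize via Lemma~\ref{lem:basic}(\ref{it:loc}), while the paper takes the local version directly), then show $T$ (resp.\ $T_W$) is a local going-down overring of $R_P$ that is not a localization.  The paper's verification is more compressed: it observes that since $\dim A_{\bar P}=0$ and the map $\Spec C \to \Spec A_{\bar P}$ is a bijection on (all) primes, one gets an order-isomorphism $\Spec T \to \Spec R_P$, from which locality, centering on the maximal ideal, and going-down all drop out at once; your steps (i)--(iii) instead extract each property by hand, using integrality and the poset isomorphism along $J$.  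Both are valid, and your dominating-localization argument in (iii) is the standard way to see that a proper local overring dominating a local ring is never a localization.  One worthwhile point in your favor: the paper's ``if'' direction simply cites Theorem~\ref{thm:fragilepullback}, whose hypothesis is $n\ge 2$; you explicitly handle $n=1$, correctly deducing from apparent fragility that $A=S/J$ and hence $R=S$, which closes an edge case the paper leaves implicit.
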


\begin{proof}
The ``if'' direction follows from Theorem~\ref{thm:fragilepullback}.

Conversely, suppose $A$ is not fragile in $B=S/J$.
Then there is some $P \in \Spec R$ with $P \supseteq J$ such that $A_{\bar P} \subseteq B_{\bar P}$ is not apparently fragile, where $\bar P = P/J$.  That is, there is a ring $C$ with $A_{\bar P} \subsetneq C \subseteq B_{\bar P}$, such that the map $\Spec C \ra \Spec A_{\bar P}$ is 1:1 on $\Min A$.  But by the dimension assumption and integrality, it follows that $\Spec C$ and $\Spec A_{\bar P}$ are order-isomorphic.  Moreover, note that $C = D_{A\setminus \bar P}$ for some ring $D$ with $A \subset D \subseteq B$.  

Now, let $T$ be the pullback of the diagram $C \hookrightarrow B_{\bar P} \twoheadleftarrow S_P$.  By Lemma~\ref{lem:basic}(\ref{it:poiso}), $\Spec T$ is order isomorphic to the disjoint union of $\Spec S_P \setminus V_{S_P}(J_P)$ with $\Spec C$.  But since $\Spec C$ is order-isomorphic to $\Spec A_P$, and since $R_P$ is the pullback of $A_{\bar P} \hookrightarrow B_{\bar P} \twoheadleftarrow S_P$ by Lemma~\ref{lem:basic}(\ref{it:loc}), it follows that the map $\Spec T \ra \Spec R_P$ is an order-isomorphism, and hence the extension $R_P \subseteq T$ satisfies going-down.  But $T \neq R_P$, even though $T$ is centered on the maximal ideal of $R_P$, whence $R_P$ (and hence also $R$) is not perinormal.
%
%
\end{proof}

For the next example, we will need the following result regarding when a certain type of diagonal embedding is apparently fragile.

\begin{lemma}\label{lem:fragileproduct}
Let $R \subseteq S$ be an inclusion of integral domains.  Then the diagonal map $d: R \rightarrow R\times S$ given by $d(r)=(r,r)$ induces an apparently fragile inclusion if and only if $S$ is algebraic over $R$.
\end{lemma}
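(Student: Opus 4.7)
The plan is to observe that Definition~\ref{def:fragile} makes apparent fragility presuppose an integral extension, so the lemma really combines two claims: that integrality of $d(R) \subseteq R \times S$ is equivalent to $S$ being algebraic (i.e.\ integral) over $R$, and that once integrality holds the extension is automatically apparently fragile. I would carry this out in three short stages.

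First, I would establish the integrality criterion: writing a general element of $R \times S$ as $(a,b) = d(a) + (0, b-a)$, integrality of $d(R) \subseteq R \times S$ is equivalent to each $(0,c)$ with $c \in S$ being integral over $d(R)$. A monic relation $f(x) \in d(R)[x]$ with $f(0,c) = 0$ has constant term forced by its first component to equal $(0,0)$, so $f(x) = x \cdot g(x)$ with $g \in d(R)[x]$ monic; the second component then gives $c \cdot g(c) = 0$ in the domain $S$, whence $c$ is a root of the monic polynomial $g$ over $R$. Conversely, any monic $R$-polynomial annihilating $c$ becomes a monic $d(R)$-polynomial annihilating $(0,c)$ after multiplication by $x$.

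For the forward direction of the biconditional, apparent fragility requires integrality by definition, and the above criterion then yields that $S$ is algebraic over $R$. For the reverse direction, assume $S$ is algebraic over $R$; the extension is then integral. The ring $d(R) \cong R$ has unique minimal prime $(0)$, while $R \times S$ has exactly the two minimal primes $P_1 = 0 \times S$ and $P_2 = R \times 0$, both contracting to $(0)$ in $d(R)$. For any intermediate ring $d(R) \subsetneq C \subseteq R \times S$, set $\p_i := P_i \cap C$ for $i=1,2$; these both lie over $(0) \subset d(R)$. I claim they are distinct. Indeed $\p_1 = \{(0,s) \in C\}$ and $\p_2 = \{(r,0) \in C\}$, so $\p_1 = \p_2$ would force every element of $C$ with vanishing first component to also have vanishing second component. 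But any $c = (a,b) \in C \setminus d(R)$ has $b \neq a$, whence $(0, b-a) = c - d(a) \in C$ is a nonzero element with first component $0$, contradicting $\p_1 = \p_2$. The distinct primes $\p_1 \neq \p_2$ of $C$ over the (unique) minimal prime of $d(R)$ witness apparent fragility.

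There is no real obstacle beyond recognizing the implicit integrality hypothesis in Definition~\ref{def:fragile}, which forces the word \emph{algebraic} in the statement to mean \emph{integral} in the usual commutative-algebra sense. The only mildly subtle point is verifying that the strict containment $d(R) \subsetneq C$ is exactly what supplies the off-diagonal element $(0, b-a)$ needed to separate $\p_1$ from $\p_2$.
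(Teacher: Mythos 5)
Your proof is correct, and it follows a genuinely different (and arguably cleaner) route than the paper's.  The paper's forward direction takes an intermediate ring $T$, reduces to an element $(0,s)\in T$, and then via a case split (whether $s\in R$ or not) manufactures explicit zero-divisors from an algebraic relation for $s$, concluding that $T$ is reduced but not a domain and so has at least two minimal primes; the converse direction constructs $C=d(R)[(0,s)]\cong R[X]$ for $s$ transcendental and argues that $C$, being a domain, cannot witness apparent fragility.  You instead isolate the integrality criterion --- an element $(0,c)$ is integral over $d(R)$ precisely when $c$ is integral over $R$, because the constant term of any monic annihilator is forced to vanish --- and then note that for \emph{any} proper intermediate ring $C$ the two contracted primes $(0\times S)\cap C$ and $(R\times 0)\cap C$ are automatically distinct (the off-diagonal element $(0,b-a)$ lies in the first but not the second).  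This exposes a structural fact the paper's argument obscures: the ``two distinct primes over $(0)$'' condition of Definition~\ref{def:fragile} holds for every $C$ strictly between $d(R)$ and $R\times S$ regardless of any algebraicity assumption, and the only thing that can fail is the integrality preamble baked into the definition.  In particular your approach avoids the paper's case analysis and the explicit zero-divisor construction, and it makes the ``only if'' direction purely a matter of the integrality criterion rather than requiring a separate transcendental counterexample.  You are also right to flag, though you might state it more pointedly, that ``algebraic'' in the statement must be read as ``integral'': Definition~\ref{def:fragile} only makes sense for integral extensions, and $d(R)\subseteq R\times S$ is integral precisely when $S$ is integral over $R$, so the lemma as written is slightly imprecise (the paper's own application in Example~\ref{ex:noperinormalization} concerns field extensions, where the two notions coincide, so no harm results).
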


\begin{proof}
First assume $S$ is algebraic over $R$.  Let $T$ be a ring with $d(R) \subsetneq T \subseteq R \times S$.  Let $(r,s) \in T \setminus R$.  By subtracting a diagonal element, we may assume $r=0$ and $s\neq 0$, so that $t=(0,s) \in T$.  If $s\in R$, then $(s,0) = (s,s)-(0,s) \in T$, so that $(s,0) \cdot (0,s)=0$ shows that $T$ has zero-divisors.  Otherwise, by algebraicity, there is some equation of the form \[
\sum_{i=0}^n r_i s^{n-i} = 0,
\]
where $n\geq 1$, $r_i \in R$ for all $0\leq i\leq n$, $r_0 \neq 0$, and $r_n \neq 0$ (with the last condition since $S$ is a domain, so that we may divide out extraneous powers of $s$).  Hence, $0 \neq (0,-r_n) = \sum_{i=0}^{n-1} d(r_i) x^{n-i} \in T$.  But then $d(r_n) + (0,-r_n)  = (r_n,0) \in T$ as well, so that $(r_n,0)\cdot (0,-r_n)=0$.  In either case, $T$ is reduced (as it is a subring of the reduced ring $R \times S$), but is not an integral domain.  Thus, $T$ has at least two minimal primes, completing the proof that $d(R)$ is apparently fragile in $R \times S$.

On the other hand, suppose $S$ is transcendental over $R$.  Then there is some $s\in S$ that is transcendental over $R$.  Let $X$ be an indeterminate, and define a map $\phi: R[X] \rightarrow R \times S$ by setting $\phi(r)=(r,r)$ for all $r\in R$ and $\phi(X) = (0,s)$.  If $g\in \ker \phi$, then $0=g(0,s) = (g(0),g(s))$, so that $g(s)=0$, whence $g$ must be the zero polynomial.  Hence, $\phi$ induces an isomorphism between $R[X]$ and $d(R)[(0,s)]$, which is thus an integral domain that is a subring of $R \times S$ strictly containing $d(R)$.  Therefore, the inclusion $d(R) \subseteq R \times S$ is not apparently fragile.
\end{proof}

\begin{example}\label{ex:noperinormalization}
Unlike the case of normality \cite[Theorem 9.1(ii)]{Mats}, and also unlike the cases of seminormality and weak normality \cite{Vit-survey}, there is in general no minimal ``perinormalization'' of an integral domain.  That is, there exist integral domains that are not perinormal, but are  the intersection of (finitely many) perinormal domains with the same fraction field.

To see this  let $K=\mathbb{Q}(\sqrt[3]{2})$ and  let $F$ be the splitting field of $t^3-2$ (as a subfield of $\mathbb{C}$).
 Define a map $\theta: K\to F$, that sends $\sqrt[3]{2}$ to a different root of $t^3-2$.
  Let $S = \mathbb{Q}[X,Y]$, where $X$ and $Y$ are commuting indeterminates over $\mathbb{Q}$.  Then there exists polynomials  $f(X)$ and $g(X)$ in $S$ such that if $\m=(f, Y)$ and $\n=(g,Y)$, then $S/\m \cong K$ and $S/\n\cong F$.    Let $J = \m\cap \n$.   Then $S$ and $J$ satisfy the assumptions in Theorem~\ref{thm:fragilepullback}.  Moreover, since $\m$ and $\n$ are distinct maximal ideals,  $S/J \cong K\times F$ by the  Chinese Remainder Theorem.  Notice that  $K\times F$ contains two distinct (but isomorphic) subfields, namely $K$ (as viewed via the diagonal map into $K\times F$), and the set $H:=\{(k,\theta(k)): k\in K\}$.  Also note $K\times F$ is integral over both $K$ and $H$.  Moreover, it follows from Lemma~\ref{lem:fragileproduct} that $K$ and $H$ are apparently fragile (hence globally fragile, by Proposition~\ref{pr:dim0}) subrings of $K\times F$.   Hence if $R$ and $T$ are the pullbacks of $K\to  K\times F \leftarrow S$ and   $H\to  K\times F \leftarrow S$ respectively,  both $R$ and $T$ are perinormal by Theorem~\ref{thm:fragilepullback}.   On the other hand, $R\cap T$ is the pullback of $K\cap H\to  K\times F \leftarrow S$.    Since $K\cap H$ is clearly not fragile in $K\times F$, it follows from Proposition~\ref{pr:subfield} that $R\cap T$ is not perinormal.  
 \end{example}

We now present an example of how Theorem~\ref{thm:fragilepullback} may be used, while also showing that the converse to Proposition~\ref{pr:relative} fails.

\begin{example}\label{ex:badpullback}  Let $k$ be a field and let $D:= k[X,Y,Z]$ where $X,Y$ and $Z$ are  commuting indeterminates over $k$.  Let $S:= k[X,Y,Z]_{(X,Y,Z)}$ and let $Q_1$ and $Q_2$ be the ideals (of $S$) $(X,Y)$ and $(Y,Z)$ respectively.  Let $J:=Q_1\cap Q_2 = (XZ, Y)$.   The images of $X,Z$ in $S/J$ will be denoted $x,z$ respectively (the image of $Y$ is 0).    Finally let $A:=k[x+z]$.  We claim that $A$ is a maximal subdomain of $S/J$, hence (globally, by Proposition~\ref{pr:contract1}) fragile in $S/J$, since $S/J$ is reduced.   To see this, let $f$ be an element of $(S/J) \setminus A$, and let $C = A[f]$.  We will show that $C$ has zero-divisors.  After subtracting off the constant term, we may assume  $f = g(x) + h(z)$, where $g$ and $h$ are polynomials over $k$ with zero constant term.  If $g=h$, then $f=g(x)+g(z) = g(x+z) \in A$ already, so we can dispense with this case.  Note that $g(x+z)$ and $h(x+z)$ are elements of $k[x+z]$.  Accordingly, let $u=f(x,z) - g(x+z) = (g(x)+h(z)) - (g(x) + g(z)) = h(z)-g(z)$, and $v=f(x,z)-h(x+z) = (g(x)+h(z)) - (h(x)+h(z)) = g(x) - h(x)$.  Since both $g$ and  $h$ have constant term zero, it follows that $u$ and $v$ are multiples of $z, x$ respectively.  Furthermore  neither $u$ nor $v$ is zero, since $g$, $h$  as abstract polynomials are distinct and in $S/J$, $x$ and $z$ are each transcendental over $k$ (albeit not jointly).  But $uv=0$, and both of them are in $C$, so this ring is not a domain.

We next claim that $S/J$ is integral over $A$.   This can be seen by noting that $x^2-(x+z)x=z^2-(x+z)z=0$.
Hence, by Theorem~\ref{thm:fragilepullback}, if $R$ is the pullback of $A\to S/J \leftarrow S$, then $R$ is perinormal.   In particular, it is perinormal in $S$.  On the other hand, since $S/J$ is integral over $A$, dim $A=1$, and both of the minimal primes of $S/J$ contract to $(0)$ in $A$, it follows that $S/J$ satisfies going-down over $A$.  As $S/J$ is local and is not a localization of $A$ (it is integral over $A$), we see that $A$ is not perinormal in $S/J$, whence the converse to Proposition~\ref{pr:relative} fails.
\end{example}

\section{Hypersurface contraction}\label{sec:hyper}

Next we examine another pullback construction where the resulting ring is perinormal.  First a preparatory lemma.

Let $S$ be an integral domain and $W\subset S$ a multiplicatively closed subset of $S$.   The saturation of $W$ is easily seen to be  the set of all $s\in S$ such that $w/s\in W$ for some $w\in W$.  Moreover, if $W$ is saturated, and if $s/w$ is a unit of $ S_W$, then $s\in W$.   To see this let $s/w, s\in S, w\in W$, be a unit of $S_W$.  Hence there exists  $t/w' \in S_W$ with $w'\in W$, such that $st/ww' =1$ or $st = ww'$.  As $W$ is saturated, we have $s$ (and $t$) in $W$.

\begin{lemma} \label{lem:saturated} Let $S$ be an integral domain such that the set of units of $S$ along with $0$ form a field $k$.    Let $J$ be a  prime ideal of $S$ such that  that all units of $S/J$ are in the image of $k$ (also denoted $k$).  Let $R$ be the pullback of $k\to S/J \leftarrow S$ (or $R:=k\times_{S/J} S$). Then $W=R\setminus J$ is a saturated multiplicatively closed subset of $S$.
\end{lemma}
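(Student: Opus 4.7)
The plan is straightforward: first note that $W$ is multiplicatively closed (because $R$ is a subring of $S$ and $J$ is a prime ideal, so $W=R\setminus J$ is closed under multiplication and contains $1$), and then establish saturation by a direct chase through the definitions, using both the assumption that the nonzero elements of $S$ form a field together with $k$ and the assumption that every unit of $S/J$ lifts to $k$.

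For the saturation step, I would take $a, b \in S$ with $ab \in W$ and show $a \in W$ (by symmetry $b \in W$ as well). Since $W \subseteq S \setminus J$ and $J$ is prime, $ab \notin J$ immediately forces $a \notin J$ and $b \notin J$; so I already get the ``not in $J$'' half of $W$-membership for free. The nontrivial half is showing $a \in R$, i.e. that the image $\bar a \in S/J$ lies in the image of $k$.

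To see this, observe that $ab \in R$ means $\overline{ab} = \bar a \bar b$ lies in (the image of) $k$. Since $ab \notin J$, this element is nonzero in $S/J$, hence a nonzero element of $k$, hence a unit of $k$, hence a unit of $S/J$. Therefore both $\bar a$ and $\bar b$ are units of $S/J$. The hypothesis that all units of $S/J$ lie in the image of $k$ then gives $\bar a, \bar b \in k$, so $a, b \in R$. Combining with $a, b \notin J$ yields $a, b \in W$, completing the proof.

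There is no real obstacle here; the only subtlety is organizing the use of the two hypotheses so they do the right work: primality of $J$ pushes $a$ and $b$ out of $J$, and the unit-lifting hypothesis pulls their residues back into $k$. The structural content of the lemma is just that the pullback construction is ``saturated enough'' that divisors of elements of $W$ remain in $W$, which is exactly what the residue-field condition on $S/J$ encodes.
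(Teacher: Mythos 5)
Your proof is correct and follows essentially the same route as the paper's: from $ab \in W$, pass to $S/J$ to see that $\overline{ab}$ is a nonzero element of $k$, hence a unit of $S/J$, so both $\bar a$ and $\bar b$ are units of $S/J$; the unit-lifting hypothesis then puts $\bar a$ and $\bar b$ back in $k$, giving $a,b \in R = k + J$, and since they lie outside $J$ they lie in $W$. The only cosmetic difference is that you invoke primality of $J$ up front to get $a,b \notin J$, while the paper deduces this at the end from $\pi(s)$ being a unit (hence nonzero); both are immediate.
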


\begin{proof} First note that $R = k+J$.   Moreover, $W$ consists of those elements of $R$ of the form $v+j$ where $0\neq v \in k$ and $j\in J$.   Let $s\in S$ be in the saturation of $W$ (within $S$).  Then there exists $s'\in S$ such that $ss'\in W$.  Let $\pi$ denote the canonical projection $\pi: S\to S/J$.  Since elements of $W$ map to $k^\times$ in $S/J$, it follows that $\pi(s)$ is a unit of $S/J$.  Hence $\pi(s) \in k$.   Therefore $s-\pi(s) =j \in J$, so that $s=\pi(s)+j \in W$.
\end{proof}

\begin{thm}[Hypersurface contraction]\label{thm:hyper} Let $S$ be a generalized Krull domain, with dim $S > 1$, such that the set of units of $S$ along with $0$ forms a field $k$.    Let $J$ be a principal height one prime ideal of $S$ that is not a maximal ideal of $S$.  Suppose further that all units of $S/J$ are in the image of $k$ (also denoted $k$).  If $R$ is the pullback of $k\to S/J \leftarrow S$ (or $R:=k\times_{S/J} S$), then $R$ is perinormal.
\end{thm}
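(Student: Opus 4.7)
The plan is to take an arbitrary local overring $(T, \mathfrak{n})$ of $R$ satisfying going-down and show it is a localization of $R$, splitting on whether $JT = T$.

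If $JT = T$, I argue as in Case~1 of Theorem~\ref{thm:fragilepullback}: since $J \not\subseteq \mathfrak{n} \cap R$, Lemma~\ref{lem:basic}(\ref{it:poiso},\ref{it:locoutside}) gives $R_{\mathfrak{n}\cap R} = S_P$ for some $P \in \Spec S$ not containing $J$, so $S \subseteq T$; Lemma~\ref{lem:goingdown} (with hypotheses supplied by Lemma~\ref{lem:basic}(\ref{it:poiso}) and $JT = T$) promotes going-down to $S \subseteq T$; since every generalized Krull domain is perinormal, $T = S_Q$ for some $Q$, and $J \not\subseteq Q$ combined with Lemma~\ref{lem:basic}(\ref{it:locoutside}) yields $S_Q = R_{Q \cap R}$, a localization of $R$.

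If $JT \neq T$, then $J \subseteq \mathfrak{n}$ and, since $R/J = k$ is a field, $\mathfrak{n} \cap R = J$, so $R_{\mathfrak{n}\cap R} = R_J$. Setting $W = R \setminus J$, Lemma~\ref{lem:saturated} gives that $W$ is saturated in $S$, and Lemma~\ref{lem:basic}(\ref{it:loc}) together with the hypothesis that units of $S/J$ lie in $k$ identifies $R_J$ with the pullback $k \times_{S/J} S_W = \{f \in S_W : \bar f \in k\}$; the inclusion $R_J \subseteq T$ is immediate. I plan to obtain the reverse inclusion by first squeezing $T$ inside $S_W$. For this, \cite[Proposition 3.9, Lemma 3.7]{nmeSh-peri} together with Lemma~\ref{lem:basic}(\ref{it:poiso},\ref{it:locoutside}) yield a bijection $\Spec^1 T \to \Spec^1 R_J$ with each corresponding localization equal to $S_Q$ for a height-one prime $Q$ of $S$ satisfying $Q \cap R \subsetneq J$, so $T \subseteq \bigcap_{Q \in \Spec^1(S_W) \setminus \{JS_W\}} S_Q$; combining this with the step $T \subseteq S_J$ (below) and the identity $S_W = S_J \cap \bigcap_{Q \neq JS_W} S_Q$ (since $S_W$ is a generalized Krull domain) gives $T \subseteq S_W$. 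Given $T \subseteq S_W$, for $t \in T$ with $\bar t \in k$ one has $t \in k + JS_W = R_J$; if instead $\bar t \notin k$, then by the units-of-$S/J$ hypothesis $\bar t$ is a nonzero non-unit of $S/J$, so $t \notin \mathfrak{n}$ would force $1/t \in T \subseteq S_W$ and hence $1/\bar t \in S/J$, contradicting non-unitness of $\bar t$, while the case $t \in \mathfrak{n}$ reduces to the construction below.

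The main obstacle is the step $T \subseteq S_J$. Suppose $v_J(t) = -n < 0$ for some $t \in T$; writing $J = (x)$, so that $S_J$ is a DVR with uniformizer $x$, the element $u := x^n t \in T$ is a unit in $S_J$ yet lies in $\mathfrak{n}$ (because $x \in \mathfrak{n}$ and $\mathfrak{n}$ is an ideal). The plan is to use the saturation of $W$ (Lemma~\ref{lem:saturated}) and the hypothesis $(S/J)^\times \subseteq k$ to produce a factorization $u = c + xs$ with $c \in k^\times$ and $s \in S_W$, exhibiting $u$ as a unit of $R_J \subseteq T$ and thereby contradicting $u \in \mathfrak{n}$. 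The delicate point is lifting the residue $\bar u \in \Frac(S/J)^\times$ to such a $k^\times$-part inside $S/J$ using the interplay between the pullback structure of $R_J$ and the DVR structure of $S_J$ at the principal height-one prime $J$.
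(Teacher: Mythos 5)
Your Case 1 ($JT=T$) is fine and matches the paper's reduction: the paper notes directly that for $P\in\Spec R\setminus\{J\}$ we have $R_P=S_Q$ for some $Q\in\Spec S\setminus V(J)$, which is perinormal because it is a localization of a generalized Krull domain; hence only the case where $T$ is centered on $J$ needs work. Your Case 2 also correctly lands on $T\subseteq\bigcap_{Q\in\Spec^1(S_W)\setminus\{JS_W\}}S_Q=(S_W)_f$ via the going-down plus valuation argument.

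The genuine gap is exactly where you flag it: the step $T\subseteq S_J$. Your plan — take $t\in T$ with $v_J(t)=-n<0$, set $u:=x^nt$ (so $u\in\mathfrak n$ and $v_J(u)=0$), and write $u=c+xs$ with $c\in k^\times$ — does not go through. For such a factorization you would need $\bar u\in k^\times$, where $\bar u$ is the residue of $u$ in $S/J$. But $v_J(u)=0$ only tells you $\bar u\neq 0$; it does not make $\bar u$ a unit in $S/J$, and the hypothesis $(S/J)^\times\subseteq k$ is irrelevant to a nonzero \emph{non-unit} $\bar u$. (You implicitly want $\bar u\in\Frac(S/J)^\times$ to ``lift'' into $k^\times$, but $\bar u$ need not even be a unit of $S/J$.) There is also a secondary handwave: in your final paragraph the sub-case $t\in\mathfrak n$ with $\bar t\notin k\cup\{0\}$ is said to ``reduce to the construction below,'' but that construction is designed for $v_J(t)<0$, whereas in this sub-case $v_J(t)=0$.

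The paper avoids the $T\subseteq S_J$ detour entirely. Once it has $T\subseteq(S_W)_f$, it picks $g\in T\setminus R_W$ and observes that \emph{either $g$ or $g-1$ is a unit of $T$}, and both are outside $R_W$; so WLOG $g$ is a unit of $T$, hence a unit of $(S_W)_f$. Units of $(S_W)_f$ have the form $uf^n$ with $u\in S_W^\times$ and $n\in\Z$, and by Lemma~\ref{lem:saturated} $u=w'/w$ with $w,w'\in W$. Then $n\geq 0$ forces $g\in R_W$ (contradiction), while $n<0$ forces $w'=wgf^{-n}$, making $f$ a unit of $T$ even though $f\in J\subseteq\mathfrak n$ (contradiction). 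The observation you are missing is precisely this ``$g$ or $g-1$ is a unit'' reduction: it lets one argue about genuine units of $T$, whose image in $(S_W)_f^\times$ is controllable via the saturation lemma, rather than about arbitrary $u\in\mathfrak n$ whose residue mod $J$ one cannot control. With that trick in hand, one also gets $T\subseteq S_J$ for free (any $t$ with $v_J(t)<0$ leads, via $u:=x^nt$ and then $g:=u-1$, to the same contradiction), so your two-step route collapses into the paper's single argument.
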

\begin{proof}      As before we have a bijection between the prime ideals of $S$ that do not contain $J$ and the prime ideals of $R$ that do not contain $J$ (Lemma \ref{lem:basic} (\ref{it:poiso})).   Since $J$ is clearly a maximal ideal of $R$, the bijection is between $\Spec S\setminus V(J)$ and $\Spec R\setminus \{J\}$.  Moreover
 if $Q\in \Spec S\setminus V(J)$ and $P:= Q\cap R$, then  $R_P =  S_Q$ (Lemma \ref{lem:basic} (\ref{it:locoutside})).
 In particular, if $P\in \Spec R$ and $P\neq J$, then $R_P = S_Q$ for some prime ideal $Q$ of $S$, whence $R_P$ is perinormal (since it is a generalized Krull domain).  Therefore  to show that $R$ is perinormal it suffices to show that if $T$ is a local overring of $R$ centered on $J$ that  satisfies going-down over $R$, then $T= R_J$.

 Clearly $R_J \subseteq T$.
 Let $W= R \setminus J$.   Then $JR_W = JS_W$, and by Lemma \ref{lem:basic} (\ref{it:loc}),  $R_J=R_W$ is the pullback of $k\to S_W/JS_W \leftarrow S_W$.  We next claim that ht$_{R_W}JR_W=$ ht$_{R}J >1$.   The equality is clear.  To see the inequality
  let $J\subset M$, where $M$ is a maximal ideal of $S$ (such an $M$ exists, since we are assuming that $J$ is not maximal in $S$).          Now let $P$ be any height one prime ideal of $S$ that is contained in $M$, where $P\neq J$.   Such primes exist since $S_M$ is the intersection of $S$ localized at the height one primes contained in $M$.  Then $0\neq P\cap R \subset J$, which proves the claim.      It is clear that as before we have a bijection between $\Spec(S_W)\setminus V(JS_W)$ and $\Spec(R_W)\setminus \{JR_W\}$.
   Since ht$_{R_W}JR_W \neq 1$, this induces a bijection between $\Spec^1(S_W)\setminus \{JS_W\}$ and $\Spec^1 (R_W)$, with corresponding localizations equal.

    Since $R \subseteq T$ satisfies going-down, each height one prime of $R$ that is contained in $J$ is lain over by a prime ideal of $T$.   Say $Q\cap R=P \in \Spec^1 R$.  Then $R_P \subseteq T_Q$.   Since $R_P$ is a rank 1 valuation ring and $T_Q$ is not a field, we have equality.  Thus if $X=\{Q\in \Spec T: Q\cap R_W \in \Spec^1 R_W\}$ and $Y=\{P\in \Spec^1(R):P\subset J\}$, then
 $$T \subseteq\bigcap_{Q\in X}T_Q  =\bigcap_{P\in Y} R_P.$$

As $J$ is a principal ideal of $S$, we can write $JS_W =fS_W$ for some $f\in S$.   Thus $f$ is contained in only one height one prime of $S_W$ (namely $JS_W$).   The ring $(S_W)_f$ (i.e., the ring $S_W$ localized at the powers of $f$) is a generalized Krull domain, and so it is the intersection of all its localizations at height one prime ideals and each of these is equal to the localization of $R$ at a height one prime contained in $J$.   When combined with the previous containment we have
$$T \subseteq \bigcap_{P\in Y} R_P= (S_W)_f.$$
We will show that the only local ring between $R_W $ and $(S_W)_f$ is $R_W$.

Suppose that this is false and let $g\in T\setminus R_W$.  We note that either $g$ or $g-1$ is a unit of $T$, and neither one is in $R_W$.   Without loss of generality, we can assume that $g$ is a unit.  Since $T\subseteq (S_W)_f$, $g$ is a unit of $(S_W)_f$.  Hence, $g =uf^n$ for some unit $u$ of $S_W$ and $n\in \Z$.  By Lemma~\ref{lem:saturated}, $u=w'/w$ for some $w,w' \in W$.  If $n\geq 0$, then since $W \subseteq R$, $g=(w'/w)f^n\in R_W$, contradicting our assumption on $g$.    Finally if $n=-m<0$ (so that $m>0$), then we have $w'=wgf^m$, with all displayed elements in $T$ and $w'$ a unit, whence $f$ is a unit of $T$, which remains a contradiction.

Thus $g$ cannot exist, whence $T=R_W$.  Therefore $R$ is perinormal.
\end{proof}

\section*{Acknowledgments}
We wish to thank Tiberiu Dumitrescu, who along with several smaller changes found an important error in the previous version of Theorem~\ref{thm:fragilepullback}, leading to a rethinking of the notion of fragility.

\providecommand{\bysame}{\leavevmode\hbox to3em{\hrulefill}\thinspace}
\providecommand{\MR}{\relax\ifhmode\unskip\space\fi MR }
\providecommand{\MRhref}[2]{%
  \href{http://www.ams.org/mathscinet-getitem?mr=#1}{#2}
}
\providecommand{\href}[2]{#2}


\begin{thebibliography}{DFF09}

\bibitem[AT51]{ArTa-lemma}
Emil Artin and John~T. Tate, \emph{A note on finite ring extensions}, J. Math.
  Soc. Japan \textbf{3} (1951), 74--77.

\bibitem[Coh46]{Coh-complete}
Irvin~S. Cohen, \emph{On the structure and ideal theory of complete local
  rings}, Trans. Amer. Math. Soc. \textbf{59} (1946), 54--106.

\bibitem[CS46]{CohSei}
Irvin~S. Cohen and Abraham Seidenberg, \emph{Prime ideals and integral
  dependence}, Bull. Amer. Math. Soc. \textbf{52} (1946), 252--261.

\bibitem[DFF09]{DFF-amalgam}
Marco D'Anna, Carmelo~Antonio Finocchiaro, and Marco Fontana, \emph{Amalgamated
  algebras along an ideal}, Commutative algebra and its applications, Walter de
  Gruyter, Berlin, 2009, pp.~155--172.

\bibitem[ES16]{nmeSh-peri}
Neil Epstein and Jay Shapiro, \emph{Perinormality -- a generalization of
  {K}rull domains}, J. Algebra \textbf{451} (2016), 65--84.

\bibitem[Fon80]{Fo-topdef}
Marco Fontana, \emph{Topologically defined classes of commutative rings}, Ann.
  Mat. Pura Appl. (4) \textbf{123} (1980), 331--355.

\bibitem[Gil68]{Gil-old}
Robert~W. Gilmer, \emph{Multiplicative ideal theory}, Queen's Papers in Pure
  and Applied Mathematics, no.~12, Queen's University, Kingston, Ont., 1968.

\bibitem[Mat86]{Mats}
Hideyuki Matsumura, \emph{Commutative ring theory}, Cambridge Studies in
  Advanced Mathematics, no.~8, Cambridge Univ. Press, Cambridge, 1986,
  Translated from the {Japanese} by {M.} {Reid}.

\bibitem[Sey70]{Sey-chain}
Hamet Seydi, \emph{{Anneaux hens\'eliens et conditions de cha\^\i nes}}, Bull.
  Soc. Math. France \textbf{98} (1970), 9--31.

\bibitem[Vit11]{Vit-survey}
Marie~A. Vitulli, \emph{Weak normality and seminormality}, Commutative algebra:
  {N}oetherian and non-{N}oetherian perspectives, Springer-Verlag, 2011,
  pp.~441--480.

\end{thebibliography}
\end{document}